\theoremstyle{plain}
\newtheorem{theorem}{Theorem}
\newtheorem{lemma}[theorem]{Lemma}
\newtheorem{proposition}[theorem]{Proposition}
\newtheorem{example}[theorem]{Example}
\theoremstyle{definition}
\newtheorem{definition}[theorem]{Definition}
\theoremstyle{remark}
\newtheorem{remark}[theorem]{Remark}
\DeclareMathOperator{\Div }{div}
\def\ds{\displaystyle}
\def\sgn{{\rm sgn}}
\newcommand{\eps}{\varepsilon}
\def\R{I\!\!R}
\def\Z{\mathbb{Z}}
\def\N{I\!\!N}
\def\pa{\partial}
\def\cal{\mathcal}
\def\b{\infty}
\def\la{\lambda}
\def\epsm{\eps_m}
\def\be{\begin{equation}}
\def\ee{\end{equation}}
\begin{document}

\title[Strong traces for transport equations]{Strong traces for averaged solutions of heterogeneous ultra-parabolic transport equations}
\date{\today}
\author{Jelena Aleksi\'c}
          \address{Jelena Aleksi\'c, Department of Mathematics and Informatics,
              University of Novi Sad, Trg D. Obradovi\'ca 4, 21000 Novi Sad,
              Serbia. }
          \email{jelena.aleksic@dmi.uns.ac.rs}
\author{Darko Mitrovic}
           \address{Darko Mitrovic,
              Faculty of Mathematics and Natural Sciences, University of
              Montenegro, Cetinjski put bb, 81000 Podgorica, Montenegro, and}
              \address{
              Faculty of Mathematics, University of
              Bergen, Johannes Bruns gate 12, 5008 Bergen, Norway}
              \thanks{The corresponding author: Darko Mitrovic, matematika@t-com.me }
           \email{matematika@t-com.me }

\begin{abstract}
       We prove that if traceability conditions are fulfilled then a weak solution $h\in L^\infty(\R^+\times\R^d\times \R)$ to {the ultra-parabolic transport equation}
       \begin{equation*}
             \pa_t h + \Div_x \left(F(t,x,\lambda)h\right)=\sum\limits_{i,j=1}^k
             \pa^2_{x_i x_j}\left(b_{ij}(t,x,\lambda) h\right)+\pa_\lambda
             \gamma(t,x,\lambda),
       \end{equation*}
       is such that for every $\rho\in C^1_c(\R)$, the velocity averaged quantity $\int_{\R}h(t,x,\lambda)$ $\rho(\lambda)d\lambda$
       admits the strong $L^1_{\rm loc}(\R^d)$-limit as $t\to 0$,
       i.e. there exist $h_0(x,\lambda)\in L^1_{\rm loc}(\R^d\times \R)$ and set $E\subset\R^+$ of full measure such that for 
       every $\rho\in C^1_c(\R)$,
       $$
         L^1_{\rm loc}(\R^d)-\lim\limits_{t\to 0, \; t\in E}
         \int_{\R} h(t,x,\lambda)\rho(\lambda)d\lambda= \int_{\R} h_0(x,\lambda) \rho(\lambda)d\lambda.
       $$
       As a corollary, under the traceability conditions, we prove existence of strong traces for entropy solutions to ultraparabolic equations in heterogeneous media.
\end{abstract}

\keywords{ultra-parabolic transport equation, trace theorem,
conservation laws, kinetic formulation\\  MSC(2010): 35K70, 35L65,
35R06, 35B65}

\maketitle

\section{Introduction}

The aim of this paper is to investigate behavior of the averaged
        quantity $\int_{\R} \rho(\lambda)$ $h(t,x,\lambda)d\lambda$, $\rho \in C^1_c(\R)$, as $t\to 0$, where $h\in
        L^\infty(\R^+\times \R^d \times \R)$ is a weak solution to the
        equation
\begin{equation}\label{opsta}
\pa_t h + \Div_x \left(F(t,x,\lambda)h\right) =\sum\limits_{i,j=1}^k
\pa^2_{x_i x_j}\left(b_{ij}(t,x,\lambda)  h\right)+\pa_\lambda
 \gamma(t,x,\lambda),
\end{equation} where $(t,x)\in \R^d_+:=\R^+\times\R^d \equiv (0,\b)\times\R^d$,
$k\leq d$, $k,d\in\N$, and $\lambda \in \R$.

         The coefficients of the equation satisfy the following assumptions:
        \begin{itemize}
           \item $F\in C^1(\R^d_+\times\R;\R^d)$ and $b_{ij}\in C^1(\R^d_+\times\R; \R)$, $i,j=1,...k$;
           \item The matrix $b(t,x,\lambda)=[b_{ij}(t,x,\lambda)]_{i,j=1,\dots,k}$
                 is nonnegative definite in the sense that
                 \begin{equation}\label{3}
                     \langle b(t,x,\lambda) \xi, \xi\rangle\geq c(\lambda) |\xi|^2,  \quad \xi\in \R^{k}, \; \la\in\R,
                 \end{equation}
                 where $\langle\cdot,\cdot\rangle$ denotes the scalar product in $\R^k$.
                 The nonnegative continuous function $c$ fulfills the following:
                 there exists increasing sequence of real numbers $\{\lambda_i \}_{i\in \Z}$, such that
                 \begin{equation}\label{4}
                        c(\lambda)>0, \ \ {\rm for} \ \ \lambda \in
                        \bigcup_{i=-\b}^{+\infty} (\lambda_{i},\lambda_{i+1});
                 \end{equation}
           Moreover, the elements of the matrix $b$ are of the following form,
                 \begin{equation}\label{2h}
                        b_{ij}(t,x,\lambda)=\sum_{l=1}^k \sigma^b_{il}(t,x,\lambda)\sigma^b_{lj}(t,x,\lambda), \ \ i,j=1,..,k,
                 \end{equation}
                 where $\sigma^b_{il}(t,x,\lambda)=\sigma^b_{li}(t,x,\lambda)$.

           \item $\gamma \in {\cal M}(\R^d_+\times\R) $ is a locally finite Borel measure.
\end{itemize}

We consider bounded solutions $h$ being equal to zero if
$\lambda\notin (-M, M)$ for some $M>0$, and such that $\pa_{x_i}
h\in L^2(\R_+^d; {\cal M}(\R))$, $i=1,\dots, k$, i.e. we assume:
           \begin{equation}\begin{split}
           \label{assum}
           h\in L^\b(\R^{d+1}_+), & \quad (\exists M>0)\; \lambda\notin (-M, M) \implies h(t,x,\la)=0 \mbox{ and} \\
          & (\forall  i=1,\dots,k)\; \|\pa_{x_i} h\|_{{\cal M}(\R_\lambda)}  \in
           L_{\rm loc}^2(\R^d_+).
          \end{split}\end{equation}
In Section 5 we will see that \eqref{assum} is fulfilled for
kinetic functions corresponding to entropy solutions to
ultra-parabolic equations.\\

We shall prove that, under traceability conditions (see Definition
\ref{d-assump} and Definition \ref{d11}) which essentially provide
us to somehow remove non-degenerate heterogeneity from the flux (see
\eqref{kingnl}),
$h$ has the following property:

\begin{definition}\label{avsubt}
We say that the function $h$ {admits an averaged trace} if there
exists a function $h_0\in L^\infty(\R^d\times \R)$ and a set
$E\subset \R^+$ of full measure such that
       \begin{equation}
       \label{res_short}
       \lim\limits_{\substack{t\to 0\\ t\in E}}\underset{K}{\;\;\int}  \Big|\int_{\R} \left(  h(t,x,\lambda)-h_0(x, \la )\right) \rho(\la) \, d\la \Big|\,  dx=0,
       \end{equation} for any function $\rho\in C^1_c(\R)$ and any relatively compact $K\subset\subset \R^d$.
\end{definition}


Equation \eqref{opsta} describes transport processes in
heterogeneous media in which the diffusion (represented by the
second order terms) can be neglected in certain directions, cf.
\cite{3}. It is linear, but since it has derivatives of a measure on
the right-hand side, it describes entropy solutions to
ultra-parabolic equations (this is a special case of \cite{PC}).
Such equations were firstly considered by Graetz \cite{4}, and
Nusselt \cite{5}, in the investigations concerning the heat
transfer. Moreover, equations of type \eqref{opsta} describe
processes in porous media (cf. \cite{saz-x}), such as oil extraction
or ${\rm CO}_2$ sequestration which typically occur in highly
heterogeneous surroundings. One can also find applications in
sedimentation processes, traffic flow, radar shape-from-shading
problems, blood
          flow, gas flow in a variable duct and so on.

The question of existence of traces was firstly raised in the
context of limit of {hyperbolic relaxation toward a scalar
conservation laws} (see e.g. \cite{Nat, Tsa}). After that, a few
interesting papers appeared from the viewpoint of obtained results
and from the viewpoint of developed techniques \cite{VK, pan_trace,
Pan1, vass}. Proofs were based on reducing scalar conservation laws
on a transport equation (kinetic formulation \cite{LPT}), and then
using velocity averaging results. Here, we shall start with a
transport equation and prove that such equations satisfy the trace
property in the sense of \eqref{res_short}. In the basis of our
procedure are the classical blow up techniques, \cite{8, pan_trace,
vass}, velocity averaging results \cite{LM}, and induction with
respect to the space dimension \cite{pan_trace}.

All previous results on traces were given for the scalar
conservation laws in homogeneous media (see e.g. \cite{VK, Pan1} and
references therein). However, heterogenous framework is much more
natural. For instance, if we have the Hele-Shaw cell and we fill it
with sand (homogeneous sand), no matter how careful we are, we
cannot get a homogeneous medium (permeability is changed up to a
factor 2 from point to point) \cite{JMN}. That means that fluxes in
equations describing phenomena in such media must depend on the
space variable.


The paper is organized as follows.

First, in Section 3,  we consider the case when the components
$F_{k+1},...,F_d$ of the flux function $F=(F_1,..., F_d)$ depend
only on the variable $\la$ (without explicit dependence on $t$ and
$x$), i.e. we consider the equation
           \begin{align}\label{homogena}
               \pa_t h+   &  \sum_{i=k+1}^d \pa_{x_i}(F_{i}(\lambda) h) - \sum_{i,j=1}^k  \pa^2_{x_i x_j}(b_{ij}(t,x,\lambda)  h) = \\
                  &=
                  \pa_\lambda \gamma(t,x,\lambda)- \sum_{i=1}^k \pa_{x_i}(F_{i}(t,x,\lambda)h),\nonumber
           \end{align}
 in $ {\cal D}'(\R^d_+\!\times\! \R)$. We prove that bounded weak solution $h$ of \eqref{homogena}  that satisfies \eqref{assum} admits an averaged trace in the sense of Definition \ref{avsubt}.


In Section 4, under additional assumptions on the flux which we
called traceability, we shall use appropriate change of variables to
reduce equation \eqref{opsta} on an equation of type
\eqref{homogena} in a neighborhood of every point where the
existence of traces could be lost.

In Section 5, under the traceability conditions, we shall prove
existence of traces for entropy solutions to ultra-parabolic
equations. In particular, this includes entropy solutions to some examples of
heterogeneous scalar conservation laws.\\

\section{Auxiliary results}


First, we prove existence of a weak trace.

      \begin{proposition}\label{wt}
              If $h\in L^\b(\R^{d}_+\times \R)$ is a distributional solution to \eqref{opsta} satisfying \eqref{assum},
              then there exists $h_0\in L^\b(\R^{d+1})$, such that
                                $$
                                  h(t,\cdot, \cdot) \rightharpoonup h_0,  \mbox{ \rm weakly-$\star$ in } L^\b(\R^{d+1}),
                                  \mbox{ \rm as } t\to 0,\; t\in E,
                                $$
              where $ E:= \{t>0\,|\,(t,x,\la) \mbox{ \rm is a Lebesgue point to }
              h(t,x,\lambda) \mbox{ \rm for a.e. } (x,\lambda)\in\R^{d+1} \}$.

Moreover, there exists a zero sequence $\eps_m$ such that for almost
every $y\in \R^d$, it holds $h_0(
\sqrt{\eps_m}\bar{x}+\eps_m\tilde{x}+y,\lambda)\to h_0(y,\lambda)$,
strongly in $L^1_{\rm
loc}(\R^d\times \R)$ as $m\to \infty$, where $\bar{x}=(x_1,...,x_k,0,...,0)$ and $\tilde{x}=(0,...,0,x_{k+1},...,x_d)$.
      \end{proposition}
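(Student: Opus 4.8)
The plan is to split the claim into two parts: (i) existence of the weak-$\star$ limit $h_0$ along the set $E$ of Lebesgue times, and (ii) the averaging/mollification statement at scale $\sqrt{\eps_m}$ in the degenerate directions and $\eps_m$ in the non-degenerate directions.

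For part (i), I would first test the equation \eqref{opsta} against $\phi(x,\lambda)\psi(t)$ with $\phi\in C_c^\infty(\R^d\times\R)$ and $\psi\in C_c^\infty(\R^+)$. Because $h$ is bounded and supported in $\lambda\in(-M,M)$, the quantity $t\mapsto \int_{\R^{d+1}} h(t,x,\lambda)\phi(x,\lambda)\,dx\,d\lambda$ is bounded uniformly in $t$; using the equation together with the assumption $\|\pa_{x_i}h\|_{\mathcal M(\R_\lambda)}\in L^2_{\rm loc}(\R^d_+)$ (which controls the second-order terms after writing $\pa^2_{x_ix_j}(b_{ij}h)=\pa_{x_i}(\pa_{x_j}(b_{ij}h))$ and distributing derivatives) one sees that this map has a distributional time-derivative that is locally a finite measure in $t$, hence it is (a.e. equal to) a function of bounded variation in $t$, so it has a limit as $t\to 0^+$. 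Applying this to a countable dense family of $\phi$ and using the uniform $L^\infty$ bound and density, one obtains a single weak-$\star$ limit $h_0\in L^\infty(\R^{d+1})$. Then I would restrict to the set $E$ of $t>0$ such that $(t,x,\lambda)$ is a Lebesgue point of $h$ for a.e. $(x,\lambda)$ — this set has full measure by Fubini applied to the set of non-Lebesgue points of $h\in L^\infty(\R^{d+1}_+)$ — and observe that along $E$ the sections $h(t,\cdot,\cdot)$ are genuine $L^\infty$ functions converging weakly-$\star$ to $h_0$.

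For part (ii), the point is purely measure-theoretic and does not use the PDE: given $h_0\in L^\infty(\R^{d+1})$, I want a single zero sequence $\eps_m$ such that the rescaled translates $z\mapsto h_0(\sqrt{\eps_m}\,\bar z+\eps_m\tilde z+y,\lambda)$ converge to the constant-in-$z$ function $h_0(y,\lambda)$ in $L^1_{\rm loc}(\R^d_z\times\R_\lambda)$ for a.e. $y$. Fix a countable family of compact sets $K_n\times[-R_n,R_n]\subset\R^d_z\times\R_\lambda$ exhausting $\R^d\times\R$. For each $n$ and each $\eps>0$, the function $y\mapsto \int_{K_n\times[-R_n,R_n]} |h_0(\sqrt\eps\,\bar z+\eps\tilde z+y,\lambda)-h_0(y,\lambda)|\,dz\,d\lambda$ is integrable over compact $y$-sets and tends to $0$ in $L^1_{y,\rm loc}$ as $\eps\to 0$, by continuity of translations in $L^1$ (apply it to the $\R^d_z$-average of $h_0$, noting the map $(z,y)\mapsto \sqrt\eps\bar z+\eps\tilde z+y$ is an affine shift of $y$ depending on a parameter $z$ ranging over a compact set). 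Choosing a sequence $\eps\to 0$ along which the $L^1_{y,\rm loc}$ convergence upgrades to a.e. convergence, and then diagonalizing over $n$, produces the desired single sequence $\eps_m$ with the a.e.-in-$y$ property.

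The main obstacle is part (i): ensuring that the time-trace exists as an honest $L^\infty$ limit along a fixed full-measure set $E$ despite the measure-valued right-hand side $\pa_\lambda\gamma$ and the second-order operator. The key technical device is to move one $x$-derivative off each diffusion term so that what remains is controlled by $\|\pa_{x_i}h\|_{\mathcal M(\R_\lambda)}\in L^2_{\rm loc}$, which guarantees that after testing against a fixed $\phi$ the time-dependence is $BV$ rather than merely $L^\infty_t$; one must also check that the $\pa_\lambda\gamma$ term contributes a finite measure in $t$ after integrating against $C^1_c$ test functions in $\lambda$, which uses that $\gamma$ is locally finite and $\phi$ is compactly supported. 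Once this $BV$-in-time structure is in hand for a countable dense set of test functions, the rest is a standard density and diagonalization argument.
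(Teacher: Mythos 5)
Your proposal is correct and follows essentially the same route as the paper: for the time--trace the paper also tests against $\phi(x)\rho(\lambda)$ and uses the equation to show that $t\mapsto\int h\,\phi\rho$ has an integrable (plus finite-measure) time derivative near $t=0$, hence a limit, and then upgrades to weak-$\star$ convergence along the Lebesgue-time set $E$ by density; for the second part the paper simply cites \cite[Lemma 2]{vass}, whose proof is the translation-continuity/Fubini/diagonalization argument you spell out. The only stylistic difference is that the paper moves \emph{both} spatial derivatives of the diffusion term onto $\phi$, so the hypothesis $\|\pa_{x_i}h\|_{\mathcal M(\R_\lambda)}\in L^2_{\rm loc}$ is not needed at this stage (and avoids the issue that an $L^2_{\rm loc}(\R^d_+)$ bound does not by itself control the integral up to $t=0$).
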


      \begin{proof}
             Since $h\in L^\b(\R^d_+\times \R)$, the family $\{ f(t,\cdot,\cdot)\}_{t\in E}$ is bounded in $L^\b(\R^{d+1})$.
             Due to weak-$\star$ precompactness of $L^\b(\R^{d+1})$, there exists a sequence $\{t_m\}_{m\in \N}$, $t_m\to 0$, as $m\to \b$,
             and $h_0\in L^\b(\R^{d+1})$, such that
                         \begin{equation}\label{wc}
                                  h(t_m, \cdot,\cdot) \rightharpoonup h_0(\cdot,\cdot), \mbox{ weakly-$\star$ in $L^\b(\R^{d+1})$, as } m\to \b.
                         \end{equation}
             For $\phi\in C_c^\b(\R^d)$, $\rho\in C^1_c(\R)$, denote
             $$
               I(t):=\int_{\R^{d+1}}h(t,x,\lambda)\rho(\lambda)\phi(x)\,dxd\lambda, \quad t\in E.
             $$
             With this notation, \eqref{wc} means that
                         \begin{equation}\label{wc1}
                                \lim_{m\to \b} I(t_m)= \int_{\R^{d+1}} h_0(x,\lambda)\rho(\lambda)\phi(x)\, dx d\lambda = :I(0).
                         \end{equation}
             Now, fix $\tau\in E$ and $m_0\in \N$, such that $t_m\leq \tau$, for $m\geq m_0$, to obtain

             \begin{equation}\begin{split}\nonumber
                                      & I(\tau)- I(t_m) = \int_{t_m}^\tau I'(t)\,dt = \int_{t_m}^\tau \int_{\R^{d+1}} \pa_t h(t,x,\la) \rho(\la) \phi(x)\,dx d\la \,dt\\
                                      &= \sum_{i=1}^d \int_{(t_m,\tau] \times \R^{d+1}} h(t,x,\lambda)F_i(t,x,\lambda) \rho(\lambda)\pa_{x_i}\phi(x)\, dxd\lambda dt \\
                                      & + \sum_{i,j=1}^k \int_{(t_m,\tau] \times \R^{d+1}} h(t,x,\lambda) b_{ij}(t,x, \lambda) \rho(\lambda) \pa_{x_i x_j}\phi(x)\, dx d\lambda dt \\
                                      & - \int_{(t_m,\tau] \times \R^{d+1}} \phi(x) \rho'(\lambda)\, d\gamma(t,x,\la).
             \end{split}\end{equation}
             Passing to the limit as $m\to\b$, and having in mind \eqref{wc1}, we obtain
  \begin{equation}\begin{split}\nonumber
  & I(\tau)- I(0) = \sum_{i=1}^d \int_{(0,\tau] \times \R^{d+1}} h(t,x,\lambda) F_i(t,x,\lambda) \rho(\lambda)\pa_{x_i}\phi(x) \, dx \,d\lambda \,dt \\
  & + \sum_{i,j=1}^k \int_{(0,\tau] \times \R^{d+1}}b_{ij}(t,x, \lambda) \rho(\lambda) h(t,x,\lambda) \pa_{x_i x_j}\phi(x)\, dx \,d\lambda \,dt \\
  & - \int_{(0,\tau] \times \R^{d+1}} \rho'(\lambda) \phi(x)\, d\gamma(t,x,\la) \underset{\tau\to 0}{-\!\!\!\longrightarrow} 0.
  \end{split}\end{equation}
             Thus, for all $\phi\in C_c^\b(\R^d)$, $\rho\in C^1_c(\R)$,
             $$
             \lim_{\tau\in E, \tau\to 0} \int_{\R^{d+1}} h(\tau,x,\lambda)\rho(\lambda) \phi(x)\,d\lambda\, dx = \int_{\R^{d+1}} h_0(x,\lambda)\rho(\lambda) \phi(x) \,d\la\, dx .
             $$
Having in mind that $h(\tau, \cdot)$ is bounded for almost every
$\tau \in \R$, and $C_c^\b(\R^{d+1})$ is dense in $L^1(\R^{d+1})$,
we complete the first part of the proof.

The second part of the proof is the same as the proof of \cite[Lemma
2]{vass}.
\end{proof}

\subsection{Scaling}

Denote
       $$
         \bar{x}=(x_1,...x_k, 0,...0)\in\R^d, \quad \tilde{x}=(0,...,0,x_{k-1},...,x_d)\in \R^d, \quad \bar{x}+\tilde{x}=x\in\R^d,
       $$
and change the variables in the following way,
 $t=\epsm \hat{t},$ $x_1= y_1 + \sqrt{\epsm} \hat{x}_1,$ \dots , $x_k= y_k + \sqrt{\epsm} \hat{x}_k,$
  $x_{k+1}= y_{k+1} + \epsm \hat{x}_{k+1}$, \dots , $x_d= y_d + \epsm \hat{x}_d$, i.e.
      \begin{equation}
      \label{*}
             (t,x,\la)= (\eps_m \hat t, \sqrt{\eps_m}\bar{\hat x} +{\eps_m}\tilde{\hat x}+y, \la),
      \end{equation}
      where $(\eps_m)_{m\in \N}$ is a sequence of positive numbers converging to zero and $y\in\R^d$ is a fixed vector.


        If we prove that for any $\rho\in C^1_c(\R)$,  the sequence
        \begin{equation}\label{scal}
        \int h^m(\hat t,\hat x,\lambda) \rho(\lambda)d\lambda:=\int h(\epsm \hat  t,\sqrt{\eps_m}\bar{ \hat x} +\eps_m\tilde{\hat x}+y,\lambda)\rho(\lambda)d\lambda,
        \end{equation} converges pointwise almost everywhere along a subsequence  (the
same subsequence for almost every $y\in \R^d$), we will obtain that function $h$ admits an averaged trace in the sense of Definition \ref{avsubt}.
        To this end, rewrite equation \eqref{homogena} in terms of
        variables $(\hat{t},\hat{x})$ given by \eqref{*}:

\begin{equation}\begin{split}\label{lhm}
L_{h^m}:=& \pa_{\hat t} h^m +\sum_{i=k+1}^d \pa_{\hat x_i} \left( F_i(\la) h^m \right)-
\sum_{i,j=1}^k \pa^2_{\hat x_i \hat x_j} (b_{ij}^m  h^m) =\\
& = \epsm \pa_\la  \gamma^m - \sqrt{\epsm} \sum_{i=1}^k \pa_{\hat
x_i} \left(  F_i^m h^m\right) =:\pa_\la \hat \gamma^m,
\end{split}\end{equation}
        where $b_{ij}^m (\hat t, \hat x, \la)= b_{ij}(\epsm \hat t, y+ \sqrt{\epsm} \bar{\hat x} +\epsm \tilde{\hat x},\la)$ and the same for $F_i^m$ and $\gamma^m$.

Let us remark that, considering the right hand side of \eqref{lhm} and due to assumption \eqref{assum}, we have that
for any $\rho\in C^1_c(\R)$, $\int \rho(\lambda) L_{h^m} d\lambda
\in {\cal M}(\R^d_+)$. Moreover, we have the following lemma whose
proof is the same as the one from \cite[Lemma 3.2]{pan_trace}.

\begin{lemma}\label{llum}
      After a possible extraction of a subsequence, for a.e $y\in \R^d$ and any $\rho\in C^1_c(\R)$,
      $$
        \int_{\R} \rho(\lambda) L_{h^m} (\hat t, \hat x,\lambda) d\lambda \to 0, \mbox{ as } m \to \b,  \mbox{ in }
        {\cal M}(\R^d_+) \ \ { strongly}.
      $$
\end{lemma}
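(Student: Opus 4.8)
The plan is to show that the right-hand side $\pa_\la\hat\gamma^m = \eps_m\pa_\la\gamma^m - \sqrt{\eps_m}\sum_{i=1}^k\pa_{\hat x_i}(F_i^m h^m)$, integrated against $\rho(\la)$, tends to zero strongly in $\mloc(\R^d_+)$ for a.e.\ fixed $y$ along a suitable subsequence. The two terms are handled separately. For the first, $\eps_m\int_\R\rho(\la)\pa_\la\gamma^m(\hat t,\hat x,\la)\,d\la = -\eps_m\int_\R\rho'(\la)\,d\gamma^m$; since $\gamma$ is a locally finite Borel measure, the total variation of $\gamma^m$ on a fixed compact set $Q\subset\subset\R^d_+$, after the change of variables $(t,x)=(\eps_m\hat t,\ y+\sqrt{\eps_m}\bar{\hat x}+\eps_m\tilde{\hat x})$, scales like the measure $\gamma$ of a shrinking neighborhood of $(0,y)$ times the Jacobian-type factor $\eps_m^{-1}\cdot\eps_m^{-k/2}\cdot\eps_m^{-(d-k)}$ — so one must check that the shrinking of the region beats this blow-up, i.e.\ that $\eps_m\cdot(\text{mass of }\gamma\text{ near }(0,y))$ rescaled still goes to $0$. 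This is exactly where the $\eps_m$ prefactor (rather than $\sqrt{\eps_m}$) is needed, and where a Lebesgue-differentiation / density argument in $y$ enters: for a.e.\ $y$ the rescaled masses are controlled, which is the source of the ``possible extraction of a subsequence, for a.e.\ $y$'' in the statement.

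For the second term, write $\sqrt{\eps_m}\int_\R\rho(\la)\sum_{i=1}^k\pa_{\hat x_i}(F_i^m h^m)\,d\la$. Here I would integrate by parts in $\hat x_i$ against a test function $\psi\in C_c^\infty(\R^d_+)$, producing $-\sqrt{\eps_m}\sum_i\int F_i^m\,\pa_{\hat x_i}\psi\,\big(\int_\R\rho(\la)h^m\,d\la\big)$, but more usefully observe that by assumption \eqref{assum} one has $\|\pa_{x_i}h\|_{\mloc(\R_\la)}\in L^2_{\rm loc}(\R^d_+)$, so $\int_\R\rho(\la)\pa_{\hat x_i}(F_i^m h^m)\,d\la$ is itself an $L^2_{\rm loc}$ function of $(\hat t,\hat x)$ (using $F\in C^1$ and the product rule, moving one derivative onto $h$ and integrating the measure against $\rho$ or $\rho'$). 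Its $L^2$ norm over a fixed compact set, after rescaling, is bounded by $C\eps_m^{-(k/4 + (d-k)/2 + 1/2)}\|\,\|\pa_{x_i}h\|_{\mloc}\,\|_{L^2(\text{shrinking nbhd of }(0,y))}$; multiplying by the prefactor $\sqrt{\eps_m}$ and again invoking Lebesgue differentiation of the $L^2$-density of $\|\pa_{x_i}h\|_{\mloc}^2$ at a.e.\ point $(0,y)$ — or more precisely, that $\int$ over the shrinking set is $o(1)$ times the volume — gives convergence to $0$ for a.e.\ $y$. One then diagonalizes in $\rho$ using a countable dense family in $C^1_c(\R)$ and the uniform bound on $h$.

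The main obstacle is the bookkeeping of the anisotropic scaling exponents: because the parabolic variables $x_1,\dots,x_k$ scale like $\sqrt{\eps_m}$ while the hyperbolic variables $x_{k+1},\dots,x_d$ and $t$ scale like $\eps_m$, the Jacobian is $\eps_m^{k/2}\cdot\eps_m^{d-k}\cdot\eps_m = \eps_m^{1+d-k/2}$, and one must verify that in each of the two terms the explicit prefactor ($\eps_m$ or $\sqrt{\eps_m}$) together with the square-root weakening coming from Cauchy--Schwarz on the $L^2$ term is strictly enough to overcome the negative powers of $\eps_m$ generated by expressing fixed-compact-set norms in the rescaled coordinates — and that the remaining decay is supplied, for a.e.\ $y$, by the absolute continuity of $|\gamma|$-mass and of $\int\|\pa_{x_i}h\|^2_{\mloc}$ on shrinking neighborhoods, which is the Lebesgue-point input. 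Since this is precisely the computation carried out in \cite[Lemma 3.2]{pan_trace} with the roles of the coefficients played there by the analogous heterogeneous coefficients here, I would follow that argument verbatim, only checking that the continuity hypotheses on $F$ and $b_{ij}$ and the structural assumption \eqref{assum} are all that is used.
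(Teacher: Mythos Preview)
Your proposal is correct and matches the paper's approach exactly: the paper gives no independent proof of this lemma but simply states that it is the same as \cite[Lemma~3.2]{pan_trace}, which is precisely the reference you invoke at the end. Your sketch of the anisotropic-scaling bookkeeping, the Fubini/Lebesgue-differentiation argument in $y$ for the measure term $\eps_m\pa_\la\gamma^m$, and the use of assumption~\eqref{assum} for the $\sqrt{\eps_m}\,\pa_{\hat x_i}(F_i^m h^m)$ terms is a faithful outline of that cited argument, adapted to the mixed parabolic/hyperbolic scaling present here.
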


\section{Averaged traces in the homogeneous case}

In this section, we consider equation \eqref{homogena}. We shall prove the following theorem.

\begin{theorem}\label{t-homog}
Assume that $h\in L^\infty(\R^d_+\!\times \!\R)$ is
a weak solution to \eqref{homogena} satisfying
\eqref{assum}. Then, there exists a function $h_0\in
L^\infty(\R^{d+1})$ such that \eqref{res_short} holds.
\end{theorem}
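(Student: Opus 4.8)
The plan is to use the blow-up/scaling machinery set up in Section 2 combined with a velocity-averaging argument and induction on the number $d-k$ of ``hyperbolic'' directions (those in which the flux components are $F_{k+1}(\lambda),\dots,F_d(\lambda)$). By the reduction already recorded after \eqref{scal}, it suffices to show that for almost every $y\in\R^d$ the scaled averages $\int h^m(\hat t,\hat x,\lambda)\rho(\lambda)\,d\lambda$ from \eqref{scal} converge pointwise a.e. along a common subsequence, and that the limit is independent of the direction of blow-up in a way that forces it to equal $\int h_0(y,\lambda)\rho(\lambda)\,d\lambda$, where $h_0$ is the weak trace from Proposition~\ref{wt}. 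First I would extract, using the $L^\infty$ bound on $h^m$ and a diagonal argument over a countable dense set of $\rho$'s, a subsequence and a limit kinetic profile $h^\infty(\hat t,\hat x,\lambda)$ (for a.e. fixed $y$). By Lemma~\ref{llum} the rescaled operator $L_{h^m}$ tends to $0$ strongly in $\mathcal M(\R^d_+)$ against every $\rho\in C^1_c$, and the rescaled second-order coefficients $b^m_{ij}$ converge locally uniformly to the constants $b_{ij}(0,y,\lambda)$; hence $h^\infty$ is a weak solution of the constant-coefficient limit equation
\[
\pa_{\hat t} h^\infty+\sum_{i=k+1}^d F_i(\lambda)\,\pa_{\hat x_i}h^\infty-\sum_{i,j=1}^k b_{ij}(0,y,\lambda)\,\pa^2_{\hat x_i\hat x_j}h^\infty=0
\]
in $\mathcal D'$, with $h^\infty$ still supported in $\lambda\in(-M,M)$ and still satisfying the structural bound $\|\pa_{\hat x_i}h^\infty\|_{\mathcal M(\R_\lambda)}\in L^2_{\rm loc}$.

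The heart of the matter is then a rigidity statement: any such $h^\infty$ must be independent of $\hat t$ (and, in fact, of the ``parabolic'' variables $\hat x_1,\dots,\hat x_k$), so that the scaled averages converge to something depending only on $y$. For this I would invoke the velocity averaging results of \cite{LM}: the averaged quantities $\int h^\infty\rho(\lambda)\,d\lambda$ gain Sobolev regularity in $(\hat t,\hat x)$ provided a non-degeneracy condition on the symbol $i\tau+\sum_{i>k}i\eta_i F_i(\lambda)+\sum_{i,j\le k}\zeta_i\zeta_j b_{ij}(0,y,\lambda)$ holds. On the degeneracy set of $\lambda$ (where $c(\lambda)=0$ and $F'$ may vanish) one uses the splitting \eqref{4} into the intervals $(\lambda_i,\lambda_{i+1})$ on which $b$ is strictly elliptic, handling the countably many ``bad'' values $\lambda_i$ separately and the behavior for $|\lambda|$ large via the compact support in $\lambda$. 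Averaging lemmas then promote weak-$\star$ convergence of $\int h^m\rho\,d\lambda$ to strong $L^1_{\rm loc}$ convergence, which is exactly pointwise a.e. convergence along a further subsequence; this is what feeds the criterion stated before \eqref{scal}.

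To upgrade ``converges along a subsequence'' to ``converges, with limit $\int h_0(y,\lambda)\rho(\lambda)\,d\lambda$'' I would argue as follows. Proposition~\ref{wt} already gives the weak-$\star$ trace $h_0$ and, in its second part, a zero sequence $\eps_m$ along which $h_0(\sqrt{\eps_m}\bar x+\eps_m\tilde x+y,\lambda)\to h_0(y,\lambda)$ strongly in $L^1_{\rm loc}$ for a.e.\ $y$; one uses that sequence in \eqref{*}. Testing the limit equation for $h^\infty$ against $\hat t$-independent test functions and using the weak-$\star$ continuity at $\hat t=0$ identifies the $\hat t=0$ slice of $\int h^\infty\rho\,d\lambda$ with $\int h_0(y,\lambda)\rho(\lambda)\,d\lambda$; the $\hat t$-independence obtained from the averaging lemma then forces $\int h^\infty(\hat t,\hat x,\lambda)\rho(\lambda)\,d\lambda\equiv\int h_0(y,\lambda)\rho(\lambda)\,d\lambda$ for a.e.\ $(\hat t,\hat x)$, and since every subsequence has the same limit, the full scaled family converges. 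Here the induction on $d-k$ enters: if some $F_i$, $i>k$, is genuinely nonlinear the averaging lemma directly kills the $\hat x_i$-dependence; if $F_i$ is affine on a subinterval of $\lambda$ one peels off that direction and reduces to a problem in one fewer hyperbolic variable, exactly as in \cite{pan_trace}.

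The main obstacle I expect is precisely the interaction between the \emph{degenerate parabolic} structure and the velocity-averaging non-degeneracy hypothesis: making sure that on each slab $(\lambda_i,\lambda_{i+1})$ where $c(\lambda)>0$ the symbol is genuinely non-degenerate uniformly, controlling the contribution of the exceptional values $\{\lambda_i\}$ (which is where a cutoff $\rho$ supported away from these points, plus the measure-derivative bound in \eqref{assum}, must be used), and propagating the strong convergence across the whole $\lambda$-axis. A secondary technical point is handling the measure-valued right-hand side $\eps_m\pa_\lambda\gamma^m-\sqrt{\eps_m}\sum\pa_{\hat x_i}(F_i^m h^m)$: Lemma~\ref{llum} asserts it vanishes strongly in $\mathcal M$, but one must check that this is compatible with the averaging lemma, which typically wants the source in a negative Sobolev space rather than merely in $\mathcal M$ — this is dealt with by noting that $\mathcal M(\R^d_+)\hookrightarrow W^{-1,p}_{\rm loc}$ for $p<d/(d-1)$ and localizing.
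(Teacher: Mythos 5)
Your overall architecture --- blow-up along \eqref{*}, velocity averaging from \cite{LM} applied to the difference between $h^m$ and its weak-$\star$ limit, identification of that limit with the weak trace $h_0$ of Proposition \ref{wt}, and induction on $d-k$ with a dimension reduction in the degenerate case --- is exactly the paper's. However, there is a genuine gap in your rigidity step. You claim that the averaging lemma yields that the blow-up limit $h^\infty$ is independent of $\hat t$ (and of $\hat x_1,\dots,\hat x_k$), and you use this alleged independence both to identify $\int h^\infty\rho\,d\lambda$ with $\int h_0(y,\lambda)\rho(\lambda)\,d\lambda$ and to conclude that the limit is subsequence-independent. Velocity averaging does no such thing: it gives strong $L^1_{\rm loc}$ compactness of the averages $\int h^m\rho\,d\lambda$, but says nothing about the limit profile being constant in $\hat t$ or $\hat x$ --- a smooth non-constant solution of the constant-coefficient limit equation is a perfectly admissible limit of such a sequence. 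The constancy must come from a different mechanism, and this is where the paper works: since the limit equation is linear with $\hat x$-independent coefficients, the profile $\tilde h_0$ is automatically an isentropic solution, hence (by the argument of \cite[Proposition 2]{pan_trace}) admits a strong trace at $\hat t=0$; that trace is identified with $h_0(y,\lambda)$ via the second part of Proposition \ref{wt} (the Lebesgue-point property of $y$); and then \emph{uniqueness of the Cauchy problem} for the linear constant-coefficient ultra-parabolic equation forces $\tilde h_0(\hat t,\hat x,y,\lambda)\equiv h_0(y,\lambda)$, because the function constant in $(\hat t,\hat x)$ is a solution with the same data. Without this uniqueness step your argument does not close.

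A secondary imprecision concerns the degenerate case: the obstruction is not that an individual $F_i$ is affine on a $\lambda$-subinterval, but that some nontrivial linear combination $\xi_{k+1}F_{k+1}(\lambda)+\dots+\xi_d F_d(\lambda)$ is constant on $(a,b)$ (the negation of \eqref{kingnl}, given ellipticity of $b$ in the first $k$ directions). The reduction therefore uses a linear change of the variables $x_{k+1},\dots,x_d$ combined with a Galilean shift $\tilde z=ct+A\tilde x$ so that the transformed flux component $\tilde F_d$ vanishes on $(a,b)$, after which Proposition \ref{th3mod} lets one freeze $z_d$ and apply the inductive hypothesis. Gluing the resulting $(d-1)$-dimensional traces back into a statement on $\R^d$ also requires the careful selection of common Lebesgue points (the sets $E$, ${\cal M}$, ${\cal M}_t$, ${\cal Z}$ in the paper) and dominated convergence, which your sketch omits.
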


In order to prove the theorem, we need a corollary of the result
from \cite{LM} (see also Remark 16 and Section 5 there).

\begin{theorem}\cite[Theorem 7]{LM}
       \label{main-result}
       Denote by $P=\{\xi\in\R^{d+1} : \,
       \xi_0^2+\xi_1^4+...+\xi_k^4+\xi_{k+1}^2+..+\xi_{d}^2=1\}$ (the
       ultra-parabolic manifold).

       Assume that $h_n\rightharpoonup 0$  weakly in
       $L^\infty_{\rm loc}(\R^+\!\!\times\!\R^{d}\!\times \!\R)$, where $h_n$
       represent weak solutions to
       \begin{equation}
       \label{geneq}
              \pa_t h_n + \Div_x \left(\tilde{F}(t,x,\lambda)h_n\right)
              =\sum\limits_{s,j=1}^k \pa^2_{x_s x_j}\left(   {\tilde{b}_{sj}(t,x,\lambda)} h_n\right)+\pa_\lambda
              \gamma_n(t,x,\lambda),
        \end{equation}
where $\tilde{F}$ and $\tilde{b}=(\tilde{b}_{sj})_{s,j=1,\dots,k}$
are continuous functions, the matrix
$\tilde{b}=(\tilde{b}_{sj})_{s,j=1,\dots,k}$ is positively definite
almost everywhere, and the sequence $(\gamma_n)_n$ is strongly
precompact in $L_{\rm loc}^2(\R;W_{\rm loc}^{-1,q}(\R^d_+))$.

Assume that for every $\xi\in P$ and almost every $(t,x)\in \R^d_+$
($i$ is the imaginary unit
       below)
       \begin{equation}\label{kingnl}
              i \left(  \xi_0 + \sum\limits_{j=k+1}^d
              \tilde{F}_j(t,x,\lambda) \xi_j \right) +\sum\limits_{s,j=1}^k {\tilde{b}_{sj}(t,x,\lambda) } \xi_s
              \xi_j \not= 0 \quad {\rm a.e.} \; \lambda\in \R.
       \end{equation}

Then, for any $\rho\in C^1_c(\R)$,
       $$
       \int_{\R}h_n(t,x,\lambda)\rho(\lambda)d\lambda \to 0 \ \ \text{
       strongly in $L^1_{\rm loc}(\R^+\times\R^d)$}.
       $$
\end{theorem}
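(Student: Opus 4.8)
The statement is a variable–coefficient velocity–averaging result tuned to the ultra-parabolic scaling, in which the second–order derivatives in $x_1,\dots,x_k$ carry the same weight as the first–order derivatives in $t,x_{k+1},\dots,x_d$; this is exactly why the frequency $\xi=(\xi_0,\dots,\xi_d)$ is renormalised on the manifold $P$ through the anisotropic symbol $\pi(\xi)=\big(\xi_0^2+\xi_1^4+\cdots+\xi_k^4+\xi_{k+1}^2+\cdots+\xi_d^2\big)^{1/2}$, which is homogeneous of degree $1$ for the dilation $\xi\mapsto(s\xi_0,\sqrt{s}\,\xi_1,\dots,\sqrt{s}\,\xi_k,s\xi_{k+1},\dots,s\xi_d)$. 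The plan is to run the H-measure (micro-local defect measure) argument in this anisotropic setting: associate to $(h_n)$ a defect measure $\mu$, extract from \eqref{geneq} a localisation principle pinning $\mathrm{supp}\,\mu$ to the zero set of the ultra-parabolic principal symbol, then use the non-degeneracy \eqref{kingnl} to annihilate the part of $\mu$ surviving velocity averaging, so that $\int_\R h_n\rho\,d\la$ has trivial defect and hence converges strongly.

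First I would localise. Replacing $h_n$ by $\theta(t,x)\zeta(\la)h_n$ for fixed cut-offs $\theta\in C^\infty_c(\R^d_+)$ and $\zeta\in C^\infty_c(\R)$ with $\zeta\equiv1$ near $\mathrm{supp}\,\rho$, and using that $\tilde F$ and $\tilde b$ are continuous, hence locally bounded, one checks that the truncated functions again satisfy an equation of type \eqref{geneq} up to a source strongly precompact in $L^2_{\rm loc}(\R;W^{-1,q}_{\rm loc}(\R^d_+))$ — every new term is a derivative of a cut-off times a bounded sequence, or that times $\tilde b$. Since the conclusion is local, we may henceforth assume $h_n$ is compactly supported, bounded in $L^2(\R^d_+\times\R)$ and $\rightharpoonup0$.

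After passing to a subsequence, I would attach to $(h_n)$ an ultra-parabolic H-measure: a non-negative Radon measure $\mu$ on $\R^d_+\times\R_\la\times P$, carried by the physical variables $(t,x,\la)$ and the renormalised frequency $\xi\in P$ dual to $(t,x)$ (the construction is the classical one once $\pi$ fixes the projection onto $P$). Testing \eqref{geneq} in the bilinear form defining $\mu$, the source $\pa_\la\gamma_n$ contributes nothing because $(\gamma_n)$ is strongly precompact, while the continuity of $\tilde F,\tilde b$ lets one freeze these coefficients modulo a commutator negligible for $\mu$; this gives the localisation principle
$$ \mathrm{supp}\,\mu\subset\Big\{(t,x,\la,\xi):\ i\Big(\xi_0+\sum_{j=k+1}^d\tilde F_j(t,x,\la)\,\xi_j\Big)+\sum_{s,j=1}^k\tilde b_{sj}(t,x,\la)\,\xi_s\xi_j=0\Big\}, $$
in which $\tilde F_1,\dots,\tilde F_k$ do not appear, being subprincipal for the anisotropic order. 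Now fix $\rho\in C^1_c(\R)$ and set $u_n:=\int_\R h_n\rho\,d\la\rightharpoonup0$. Testing \eqref{geneq} against $\rho(\la)$ and comparing with the symbolic relation above, the micro-local defect of $(u_n)$ can be carried only by frequencies $\xi\in P$ for which the ultra-parabolic symbol $i(\xi_0+\sum_{j>k}\tilde F_j\xi_j)+\sum_{s,j\le k}\tilde b_{sj}\xi_s\xi_j$ vanishes on a $\la$-set of positive measure; \eqref{kingnl} excludes precisely this. Hence $(u_n)$ has trivial defect, so $u_n\to0$ in $L^2_{\rm loc}(\R^d_+)$ and therefore in $L^1_{\rm loc}(\R^d_+)$; the limit being independent of the extracted subsequence, the full sequence converges.

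\emph{Main obstacle.} The weight of the proof sits in two places. First, the variable-coefficient localisation principle: showing that replacing $\tilde F(t,x,\la),\tilde b(t,x,\la)$ by their frozen values inside $\mu$ leaves only a commutator that can be discarded — this is where the mere continuity (rather than, say, Lipschitz regularity) of the coefficients must be used, via the first-commutation lemma for H-measures. Second, and more delicate, reconciling \eqref{kingnl}, which holds for every $\xi$ and a.e. $(t,x,\la)$, with the conclusion for $\mu$-a.e. point, since the physical marginal of an H-measure need not be absolutely continuous in $\la$; this is exactly what the parametrised / ultra-parabolic H-measure refinement of \cite{LM} (its Remark~16 and Section~5) is built to handle, and it is the step I would treat with the most care.
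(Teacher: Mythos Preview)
The paper does not contain a proof of this statement at all: Theorem~\ref{main-result} is quoted verbatim as \cite[Theorem~7]{LM} and used as a black box in the proof of Theorem~\ref{t-homog}, with only a pointer to ``Remark~16 and Section~5 there'' for the precise variant needed. So there is no in-paper proof to compare against.

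That said, your sketch is an accurate outline of how the result is actually established in \cite{LM}: one builds an anisotropic (ultra-parabolic) H-measure adapted to the symbol $\pi(\xi)$, derives the localisation principle that pins $\mathrm{supp}\,\mu$ to the zero set of the principal symbol in \eqref{kingnl}, and then uses the non-degeneracy condition to kill the defect of the averaged sequence. Your identification of the two delicate points --- the commutator/freezing argument under mere continuity of the coefficients, and the passage from ``a.e.\ $(t,x,\lambda)$'' in \eqref{kingnl} to ``$\mu$-a.e.'' --- is exactly right, and these are precisely the issues that the parametrised H-measure machinery in \cite{LM} is designed to address. If you want to turn this into a self-contained proof you would need to reproduce that machinery; as a reader's guide to \cite{LM} your proposal is on target.
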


\begin{remark}
Since the matrix $b$ is positively-definite almost everywhere, it is enough to assume that for almost
       all $x\in \R^d$ and every $\tilde\xi\in \R^{d-k}$, $\tilde\xi\neq 0$,  the function $\lambda \mapsto \sum\limits_{j=k+1}^d
              \tilde{F}_j \xi_j$ is not constant  on a set of positive measure.
Remark that we can also use somewhat weaker assumptions given in \cite[Definition 2]{pan_JMS}.

Remark also that, due to linearity of equation \eqref{geneq}, the
condition $h_n\rightharpoonup 0$ in Theorem \ref{main-result} can be replaced by boundedness of
$(h_n)_{n\in\N}$. In that case, there exists a subsequence of
$(h_n)_{n\in\N}$ (not relabeled here) such that for some $h\in
L^\infty_{\rm loc}(\R_+^{d}\!\times \!\R)$, it holds
$h_n\rightharpoonup h$. Then, the sequence $(h_n-h)_{n\in\N}$
satisfies equation of type \eqref{geneq}, and we have
$$
       \int_{\R^m}h_n(t,x,\lambda)\rho(\lambda)d\lambda \to  \int_{\R^m}h(t,x,\lambda)\rho(\lambda)d\lambda \ \ \text{
       strongly in $L^1_{\rm loc}(\R^+\times\R^d)$}.
$$
\end{remark}

Next, we need the following proposition.

\begin{proposition}\label{th3mod}
         Let $h$ be a distributional solution to \eqref{homogena} satisfying \eqref{assum},
         and suppose that, in \eqref{homogena}, the component $F_d$ of the flux vector $F$ is absent,
         i.e. equation \eqref{homogena} has the form
        \begin{align}\label{1_red}
        &\pa_t h+ \sum_{i=1}^k \pa_{x_i}(F_{i}(t,x,\lambda)h)+\sum_{i=k+1}^{d-1} \pa_{x_i}(F_{i}(\lambda) h) \\
        &=\sum_{i,j=1}^k \pa_{x_i x_j}(b_{ij}(t,x, \lambda) h)+\pa_\lambda \gamma(t,x,\lambda), \ \ {\rm in} \ \ {\cal
        D}'(\R^+\!\!\times\! \R^d\!\times\! \R).  \nonumber
        \end{align}
        Then for a.e. $x_d\in\R$, $\tilde{h}(t,x',\lambda):=h(t,x',x_d,\lambda)$, $x'=(x_1,...,x_{d-1})$, is a weak solution to
        the (reduced) equation \eqref{1_red}, and $x_d$ is treated like a parameter.
\end{proposition}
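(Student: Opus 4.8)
The plan is to freeze the variable $x_d$ and read off a $(d-1)$-dimensional problem. The structural observation that makes this possible is that, once $F_d$ is absent, equation \eqref{1_red} contains no derivative in $x_d$: the first-order terms involve only $\partial_{x_i}$ with $i\le d-1$, and the second-order terms only $\partial^2_{x_ix_j}$ with $i,j\le k\le d-1$. Hence $x_d$ enters merely as a parameter --- in the coefficients $F_i(t,x',x_d,\lambda)$ (for $i\le k$), $b_{ij}(t,x',x_d,\lambda)$, and inside the measure $\gamma$ --- and one expects the restriction $\tilde h(t,x',\lambda):=h(t,x',x_d,\lambda)$ to solve, for a.e.\ $x_d$, the reduced equation in ${\cal D}'(\R^+\times\R^{d-1}\times\R)$ with the $x_d$-slice of $\gamma$ on the right. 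I would establish this by testing the weak formulation of \eqref{1_red} against product functions and using Fubini's theorem together with a disintegration of $\gamma$.

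Concretely, I would first disintegrate $\gamma$ along the projection $\pi:(t,x,\lambda)\mapsto x_d$ (working on balls and passing to the limit to deal with local, rather than global, finiteness). With $\Lambda:=\pi_\#|\gamma|$ and its Lebesgue decomposition $\Lambda=a(x_d)\,dx_d+\Lambda_{\rm s}$, the disintegration theorem provides probability measures $\mu_{x_d}$ on $\R^+\times\R^{d-1}\times\R$ with $|\gamma|=\int\mu_{x_d}\,d\Lambda(x_d)$, and one sets $\gamma^{x_d}:=a(x_d)\,\frac{d\gamma}{d|\gamma|}(\cdot,x_d,\cdot)\,\mu_{x_d}$, a locally finite Borel measure for a.e.\ $x_d$. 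Next, inserting test functions $\varphi(t,x',\lambda)\,\psi(x_d)$, with $\varphi\in C_c^\infty(\R^+\times\R^{d-1}\times\R)$ and $\psi\in C_c^\infty(\R)$, into the weak formulation of \eqref{1_red} and using that every derivative falls on $\varphi$, Fubini gives, for every $\psi$,
$$\int_\R\psi(x_d)\,G_\varphi(x_d)\,dx_d=-\big\langle\gamma,\;\psi\,\partial_\lambda\varphi\big\rangle,$$
where $G_\varphi(x_d)$ is the expression obtained by pairing $\tilde h$ and the frozen coefficients against $\varphi$ exactly as in the weak form of the reduced equation; it belongs to $L^1_{\rm loc}(\R_{x_d})$ because the $(t,x',\lambda)$-integrations run over a fixed compact set and $h\in L^\infty$. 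The right-hand side, viewed as a functional of $\psi$, is integration against the locally finite signed measure $\pi_\#(\partial_\lambda\varphi\cdot\gamma)$, which by the disintegration splits into an absolutely continuous part, equal to $\langle\gamma^{x_d},\partial_\lambda\varphi\rangle\,dx_d$, and a part carried by $\Lambda_{\rm s}$. Since the left-hand side is an absolutely continuous measure in $x_d$, uniqueness of the Lebesgue decomposition forces the $\Lambda_{\rm s}$-part to vanish and leaves $G_\varphi(x_d)=-\langle\gamma^{x_d},\partial_\lambda\varphi\rangle$ for a.e.\ $x_d$. Running $\varphi$ over a countable dense family and then approximating an arbitrary $\varphi\in C_c^\infty(\R^+\times\R^{d-1}\times\R)$ in $C^2$ (legitimate because $\tilde h\in L^\infty$, the coefficients are continuous, and $\gamma^{x_d}$ is a locally finite measure), one concludes that, for a.e.\ $x_d$, the identity holds for all $\varphi$ --- that is, $\tilde h$ solves the reduced equation \eqref{1_red}, with $x_d$ frozen in the coefficients and $\gamma$ replaced by $\gamma^{x_d}$, which is again an equation of the same type in one lower space dimension.

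Finally I would check that $\tilde h$ inherits \eqref{assum}: the bound $\tilde h\in L^\infty(\R^+\times\R^{d-1}\times\R)$ and the support property $\tilde h(t,x',\lambda)=0$ for $\lambda\notin(-M,M)$ pass to a.e.\ slice by Fubini, and for $i\le k<d$ the operator $\partial_{x_i}$ commutes with restriction to $\{x_d={\rm const}\}$, so for a.e.\ $x_d$ one has $\partial_{x_i}\tilde h(t,x',\cdot)=(\partial_{x_i}h)(t,x',x_d,\cdot)$ in ${\cal M}(\R_\lambda)$; since $(t,x)\mapsto\|\partial_{x_i}h(t,x,\cdot)\|_{{\cal M}(\R_\lambda)}$ lies in $L^2_{\rm loc}(\R^d_+)$, Fubini gives $(t,x')\mapsto\|\partial_{x_i}\tilde h(t,x',\cdot)\|_{{\cal M}(\R_\lambda)}\in L^2_{\rm loc}(\R^+\times\R^{d-1})$ for a.e.\ $x_d$. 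The only step requiring genuine care is the disintegration of $\gamma$ together with the Lebesgue-decomposition bookkeeping; the remainder is a direct application of Fubini's theorem to an equation that carries no $x_d$-derivative.
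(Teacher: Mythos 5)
Your proposal is correct and follows essentially the same route as the paper: test the equation against product test functions, use Fubini and the boundedness of $h$ to see that the non-measure terms define an $L^1_{\rm loc}$ function of $x_d$, deduce that the $x_d$-marginal of the $\gamma$-term must be absolutely continuous (the paper gets this by allowing $\psi\in L^\infty_c$, you by uniqueness of the Lebesgue decomposition --- the same fact), and conclude for a.e.\ $x_d$ via a countable dense family of test functions. Your explicit disintegration of $\gamma$ and the verification that $\tilde h$ inherits \eqref{assum} are welcome elaborations of points the paper leaves implicit, but they do not change the argument.
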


\begin{proof}
Take $\psi\in L_c^\infty(\R)$, $\phi\in C^2(\R^+\times \R^{d-1})$,
      and $\rho\in C^1_c(\R)$ and test \eqref{homogena} on $\phi(t,x') \psi(x_d) \rho(\lambda)$ to obtain
\begin{equation} 
      \Big| \underset{\R^+\times\R^d\times \R}{\int} \psi(x_d)\phi(t,x')\rho'(\lambda)d \gamma(t,x',x_d,\lambda)\Big| \leq
      C(\|\rho\|_{C^1},\|\phi\|_{C^2})\|\psi\|_{L^\infty}.
\end{equation}
Furthermore, it is not difficult to see that
$\int_{\R^+\times\R^{d-1}\times \R}\phi(t,x')\rho'(\lambda)d
\gamma(t,x',x_d,\lambda)\in {\cal M}(\R_d)$ is absolutely continuous
with respect to the Lebesgue measure. Thus, for every fixed $\rho$
and $\phi$, the function $x_d\mapsto \int_{\R^+\times\R^{d-1}\times
\R}\phi(t,x')\rho'(\lambda)d \gamma(t,x',x_d,\lambda)$ belongs to
$L^1_{\rm loc}(\R)$. Therefore, there exists a set $E$ of full
measure such that \eqref{1_red} is satisfied for $x_d\in E$ for the
taken functions $\rho$ and $\phi$.

Now, take test functions $\phi\rho\in S$, $\phi=\phi(t,x')$,
$\rho=\rho(\la)$,  where $S$ is a countable dense subset of
$C^2_c(\R_+^{d-1})\times C^1_c(\R)$. Again, we can find a set $E$ of
full measure such that \eqref{1_red} is satisfied for $x_d\in E$ and
all  $\phi\rho\in S$. Using the density of $S$ in
$C^2_c(\R_+^{d-1})\times C^1_c(\R)$, we conclude that for every
$x_d\in E$, \eqref{1_red} holds for any $\eta=\phi\rho\in
C^2_c(\R_+^{d-1})\times C^1_c(\R)$, i.e. due to the density
arguments again, for any $\eta\in C^2(\R^{d-1}_+\times \R)$.
\end{proof}

$\;$

{\bf Proof of Theorem \ref{t-homog}:} First assume that equation
\eqref{homogena} is such that for almost every $y\in \R^d$ the flux
and the diffusion matrix
\begin{equation}
\label{new_flux} (1,F(0,y,\lambda)) \ \ {\rm and} \ \
b(0,y,\lambda)=(b_{sj}(0,y,\lambda))_{s,j=1,\dots k}
\end{equation}respectively, are non-degenerate on an interval $(a,b)$ in the
sense of \eqref{kingnl} (we replace there $\tilde{F}$ and
$\tilde{b}$ by $F(0,y,\lambda)$ and $b(0,y,\lambda)$). Consider the
sequence $h^m(\hat t, \hat x, y,\la)=h(\eps_m \hat t, \sqrt{\eps_m}
\bar{\hat x}+\eps_m \tilde{\hat x}+y,\lambda)$ defined in
\eqref{scal}, that satisfies \eqref{lhm}. For a fixed $y$,  the
sequence  $(h^m)_m$ is bounded in $L^\b(\R^{d+1}_+)$ and has a
weakly-$\star$ convergent subsequence (not relabeled). Denote by
$\tilde{h}_0(\hat t,\hat x,y,\lambda)$ its weak-$\star$ limit.

Notice that the sequence of functions $w_m=h^m-\tilde{h}_0$
fulfills assumptions of Theorem \ref{main-result} (i.e. it satisfies
equation \eqref{geneq} with $\tilde{F}=F(0,y,\cdot)$ and
$\tilde{b}=b(0,y,\cdot)$), so there exists a subsequence of
$(h^m)_m$ (not relabeled) such that for any $\rho \in C^1_c(a,b)$
\begin{equation}
\label{nv1} \int_{a}^b h^m(\hat t,\hat x,y,\lambda) \rho(\lambda)d\lambda \to
\int_{a}^b\tilde{h}_0(\hat t, \hat x,y,\lambda)\rho(\lambda)d\lambda \ \ {\rm
in } \ \ L^1_{\rm loc}(\R^d_+\times \R).
\end{equation}
According to Lemma \ref{llum}, the function $\tilde{h}_0$ satisfies (we remind that $y$ is fixed):
$$
\pa_{\hat t} \tilde{h}_0 + \sum_{i=k+1}^d \pa_{\hat x_i} \left(F_i(\la) \tilde h_0\right)
=\sum\limits_{i,j=1}^k
             \pa^2_{\hat x_i  \hat x_j}\left(   { \hat b_{ij}(0,y,\lambda)  }
             \tilde{h}_0\right).
$$
Since the last equation is linear, the function $\tilde{h}_0$ is
also its isentropic solution (see e.g. \cite{pan_kand}) and thus,
repeating the proof of \cite[Proposition 2]{pan_trace}, we conclude
that $\tilde{h}_0$ admits the {strong} trace at $t=0$. From here and
according to {Proposition \ref{wt}}, we conclude that
$$
\tilde{h}_0(0,\hat x,y,\lambda)=h_0(y,\lambda),
$$ where $h_0$ is defined in Proposition \ref{wt}. Details can be found in the proof of \cite[Theorem 2]{pan_trace}.

Since a solution to a Cauchy problem for linear ultra-parabolic
equations with regular coefficients must be unique, we conclude
that, for almost every $y\in \R^d$, the limit from \eqref{nv1} does
not depend on the choice of the subsequence. Moreover, we conclude
that for almost every $(y,\lambda)\in \R^d\times \R$, it holds
$$
\tilde{h}_0(\hat t,\hat x,y,\lambda)=h_0(y,\lambda).
$$
Thus, for $t_m=\eps_m \hat t$ ($\hat t>0$ is fixed)
$$
\int_{a}^b h(t_m,\hat x, y,\lambda)\rho(\lambda)d\lambda \to \int_{a}^b
h_0(y,\lambda)\rho(\lambda)d\lambda \ \ {\rm in} \ \
L^1_{\rm loc}(\R^d).
$$
Since the sequence $(t_m)$ is arbitrary, this proves Theorem
\ref{t-homog} in the non-degenerate case.\\

Now assume that  the non-degeneracy is lost in $(a,b)$ for the flux
and the diffusion from \eqref{new_flux}. In this case we use the
method of mathematical induction with respect to
$d-k\geq 0$.\\

\noindent{\em Step 1.}
                Assume that $d-k=0$. In this case, equation
                \eqref{homogena} reduces to the strictly parabolic
                equation which means that it satisfies the
                non-degeneracy condition from \eqref{kingnl} for the flux and the diffusion from \eqref{new_flux}. Thus, the existence of traces follows
                from the first part of the proof.\\

\noindent{\em Step 2.}
             Assume that if $h\in L^\b(\R^{d-1}_+\times \R)$, $h=h(t,x_1,...,x_{d-1},\lambda)$, is a weak solution to \eqref{1_red},
             then  there exists $h_0\in L^\b (\R^{d-1}\times \R)$ and a set $E\subset \R^+$ of full measure, such that for all zero sequences $t_m\in E$ and every $\rho\in C^1_c(\R)$
             $L^1_{\rm loc}(\R^{d-1})-\lim_{m\to \infty} \int_{\R} h(t_m,x,\lambda)
             \rho(\lambda) d\lambda = \int_{\R} h_0(x,\lambda)
             \rho(\lambda) d\lambda $.\\

\noindent{\em Step 3.}
         Let $h\in L^\b(\R^{d}_+\times \R)$ be a weak
         solution to \eqref{homogena}.
         Since the non-degeneracy is lost in $(a,b)$,  there exists
         nonzero vector $(\xi_{k+1},...,\xi_d)\in \R^{d-k+1}$ and a constant $c_d$ such
         that
         \begin{equation}\label{gnc}
                 \xi_{k+1} F_{k+1}(\la)+...+\xi_d  F_d(\la)=-c_d, \ \ \lambda\in (a,b).
         \end{equation}
         Introduce the change of spatial variables
         $\tilde x\equiv(x_{k+1},...,x_d)\in \R^{d-k}\mapsto (z_{k+1},...,z_d)$ $\equiv\tilde z\in \R^{d-k}$ as
         $\tilde{z}=ct+A\tilde{x}$, where $c=(c_{k+1},...,c_d)^\top$ and
         $A=[a_{ij}]_{i,j=k+1,...,d}\in\R^{d-k\times d-k}$, $a_{ij}=a_{ji}$.
         Other spatial variables will remain unchanged, i.e. $z_1=x_1$, ..., $z_k=x_k$.
         With this change, for $h=h(t,z,\lambda)$, equation \eqref{homogena} becomes
        $$
          h_t + \sum_{i=1}^{k}\pa_{z_i} \left[ F_{i}(t,z,\lambda)h \right]+
          \sum_{l=k+1}^{d} \pa_{z_l}\left[ \left( c_l  + \sum_{i=k+1}^{d}a_{li} F_{i} \right) h \right]=
          \sum_{i,j=1}^{k}\pa_{z_j} \left[ b_{ij} \pa_{z_i}h\right]+\pa_\la \gamma.
        $$
        Denote $\tilde{F}_{l} := c_l  + \sum_{i=k+1}^{d}a_{li} F_{i} $, $l=k+1,...,d$, and $\tilde{F}_{i}:=F_{i}$, $i=1,...,k$.
        According to \eqref{gnc}, we choose 
        $a_{d,k+1}:=\xi_{k+1}$, ..., $a_{d,d}:=\xi_{d}$ and obtain
        $\pa_{z_d}\tilde{F}_d(t,z,\lambda) 
        =0$, for $\lambda\in(a,b)$.
        Thus, for $\lambda\in (a,b)$, the equation takes the following form,
        \begin{align}\label{induk}
               \pa_t h+\sum\limits_{i=1}^{k}\pa_{z_i}(F_{i}(t,z,\lambda)
               h)+\sum\limits_{i=k+1}^{d-1} \pa_{z_i}
               (\tilde{F}_i(\lambda)h)=\sum_{i,j=1}^k \pa^2_{z_iz_j}
               (b_{ij} h)+\pa_\la \gamma.
        \end{align}
        According to Proposition \ref{th3mod}, for a fixed (parameter)
        $z_d$, the function $h=h(t,z',z_d,$ $\lambda)$, $z'\in\R^{d-1}$ is a
        weak solution to \eqref{induk}, i.e. $h(t,z',z_d,\lambda)$ is a weak
        solution to \eqref{induk}, for a.e. $z_d\in\R$.

        Then, according to the inductive hypothesis, for a.e. $z_d\in\R$,
        there is $\tilde{h}_0(z',\la)\equiv \tilde{h}_0[z_d]\in L^\b(\R^{d-1}\times \R)$ such that  for  all $\rho\in C^1_c(\R)$,
        $$
        L^1_{\rm loc}(\R^{d-1})-\lim_{m\to \infty} \int_{\R} h(t_m,z',z_d,\lambda)
             \rho(\lambda) d\lambda = \int_{\R} h_0[z_d](z',\lambda)\rho(\lambda) d\lambda,
        $$
        for a sequence $(t_m)$ of positive numbers tending to zero.

        To obtain the analogical assertion in $\R^d$, we need a special choice of $(t_m,z_d)$, so we use the following
        construction (the same construction is used in \cite{pan_trace}).
        Denote

                \begin{align*}
E:=  &  \{t>0\,|\, (t,x,\lambda) \mbox{ is a Lebesgue point to }
h(t,x,\lambda) \mbox{ for a.e. } (x,\lambda)\in\R^d\times\R
        \},\\
               \cal M:=  & \{(t,z,\lambda)\equiv (t, z',z_d,\lambda)\,|\, (t,z,\lambda) \mbox{ is a Lebesgue point to } h \mbox{ and }\\
                                     & \qquad \;\qquad \qquad\qquad\qquad\;(t,z',\lambda) \mbox{ is a Lebesgue point to } h(\cdot, z_d,\cdot) \},\\
               \cal M_t:= & \{(z,\la)\,|\, (t,z,\lambda)\in \cal M \},
        \end{align*}
         which are sets of full measure. There exists a subsequence $(t_{r})_{r}$ from $E'=\{t>0: \; {\cal M}_t \ \ \text{is a set of full measure} \}$ such that
                $(t_r,z',\lambda)$  is a Lebesgue point to $h(\cdot, z_d,\cdot)$. Take
         $$
             z_d\in \cal Z= \bigcap_r \cal Z_r, \mbox{ where }  \cal Z_r:=\{s\in\R \, |\, (z', s,\lambda)\in \cal M_{t_r}\}.
         $$
           Applying the inductional hypothesis to $h(t_r,z',z_d,\lambda)$ we obtain
           that there exists $\tilde{h}_0(\cdot,z_d,\la)\in L^\b(\R^{d-1}\times \R)$ such that
           $$
           L^1_{\rm loc}(\R^{d-1})-\lim_{r\to \infty} \int_{\R} h(t_r,z',z_d,\lambda)
             \rho(\lambda) d\lambda = \int_{\R} \tilde{h}_0(z',z_d,\lambda)
             \rho(\lambda) d\lambda.
           $$
           With the choice $z=(z',z_d)$, we have that $\tilde h_0(z,\la)= \tilde h_0 (z',z_d,\la)\in L^\b(\R^{d}\times \R)$,
           and then apply the Lebesgue dominated convergence theorem to conclude that
           $$
             L^1_{\rm loc}(\R^{d})-\lim_{r\to\b} \int_{\R}h(t_r,
             z,\lambda)\rho(\lambda)d\lambda= \int_{\R} \tilde h_0 (z,\la)\rho(\lambda)d\lambda.
           $$
           Now, the same limit relation follows for the original variable $x$, i.e.
           $$
             L^1_{\rm loc}(\R^{d})-\lim_{r\to\b}\int_{\R}h(t_r,x,\lambda)\rho(\lambda)d\lambda= \int_{\R} \tilde h_0 (x,\la)\rho(\lambda)d\lambda.
           $$
           $\Box$





\section{The heterogeneous case; proof of the main theorem}

In this section we shall prove the main result of the paper. First,
we introduce the notion of traceability.
%
%

\begin{definition}
\label{d-assump} We say that the coefficients of equation
\eqref{opsta} satisfy {\em the strong traceability conditions} at
the point $(x^0,\lambda^0)\in \R^d\times \R$ if one of the following
two conditions are satisfied:

\vspace{0.5cm}

{\em 1)} There exists a neighborhood of $(0,x^0,\lambda^0)$ in which
the non-degeneracy condition \eqref{kingnl} is fulfilled for the
flux and the diffusion from \eqref{new_flux}, or

\vspace{0.5cm}

{\em 2)} There exists a neighborhood $U$ of $x^0 \in \R^d$, $T>0$,
an interval $\lambda_0 \in (\alpha,\beta)\subset \R$, and a regular
change of variables $(\hat{t},\hat{x}): (0,T)\times U \to
(0,\hat{T})\times \hat{U} \subset \R^{d+1}$ such that

\begin{itemize}

\item For all $s=k+1,..., d$, $t\in (0,T)$, $x\in U$ and $\lambda \in (\alpha,\beta)$
\be\label{kapaF}
\hat F_s             
+\sum\limits_{i,j=1}^k b_{ij} \cdot
\left(2 \sum\limits_{r=0}^d\pa_{\hat{x}_r} \left[ \frac{\pa
\hat{x}_r}{\pa x_j}     \frac{\pa \hat{x}_s}{\pa
x_i} \right] - \frac{\pa^2 \hat x_s}{\pa x_i \pa x_j} )\right)=:p_s(\lambda)
\ee
is independent of $x\in U$, $t\in(0,T)$,  
where we denote $\hat F_s = \ds \sum\limits_{i=0}^d F_i \frac{\pa \hat{x}_s}{\pa x_i}$, for $s=0,...,d$,  $F_0=1$, $x_0=t$ and $\hat x_0=\hat t$.

\item The coefficient
\begin{equation}
\label{t-coord} \hat{F}_0=\sum\limits_{j=0}^d F_j \frac{\pa
\hat{t}}{\pa x_j} \equiv const \neq 0.
\end{equation}

\item The matrix
\be\label{kapab}
\left( \sum\limits_{i,j=1}^k  b_{ij} \frac{\pa \hat{x}_s}{\pa x_i} \frac{\pa \hat{x}_r}{\pa x_j} \right)_{s,r=0,\dots,d}=: (\hat{b}_{sr})_{s,r=0,\dots,d},
\ee
has the same properties as the matrix $b$ from \eqref{3} but with respect to the variables
$(\hat{x}_0,\hat x_1,..., \hat x_d)$.
\end{itemize}
\end{definition}

\begin{remark}
The matrix $\hat b$ defined in \eqref{kapab} should have null entries for $\max \{s,r\}>k$ and also for $\min \{s,r\}=0$. This implies that
$$
\frac{\pa \hat{x}_s}{\pa x_i} = 0 \quad  \mbox{ if } \quad s\in\{0,k+1,k+2,...,d\} \mbox{ and } i\in \{1,...,k\},
$$ i.e. that the variables $\hat{x}_s= \hat{x}_s(t, x_{k+1},..., x_d)$,
$s\in\{0,k+1,k+2,...,d\} $,   do not depend on $x_i$, $i=1,...,k$. This
fact reduces  condition \eqref{kapaF} to
$$
\hat F_s   = \frac{\pa \hat{x}_s}{\pa t} +   \sum\limits_{i=k+1}^d F_i \frac{\pa \hat{x}_s}{\pa x_i}
=p_s(\lambda), \quad s=k+1,...,d
$$
and condition
\eqref{t-coord} to
$$
\hat{F}_0=\frac{\pa \hat{t}}{\pa
t}+\sum\limits_{j=k+1}^d F_j \frac{\pa \hat{t}}{\pa x_j} \equiv
const.
$$

We would like to thank to the referee for this remark.
\end{remark}


\begin{theorem}\label{mainalbas}
       Let  $h\in L^\infty(\R^d_+\!\times \!\R)$
       be a weak solution to \eqref{opsta}. Moreover, assume that equation \eqref{opsta}
       satisfies the strong traceability conditions from Definition \ref{d-assump} on a set $E\times F \subset \R^d\times \R$ of full measure.
       Then the function $h$ admits an averaged trace in the sense of Definition \ref{avsubt}.
\end{theorem}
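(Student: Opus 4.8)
The plan is to localise in $x$ and in $\la$ and then to treat separately the two alternatives of Definition \ref{d-assump}. Because the exceptional sets $\R^d\setminus E$ and $\R\setminus F$ are Lebesgue--null, because $h$ vanishes for $|\la|$ large, and because \eqref{res_short} involves only relatively compact $K$, a partition--of--unity argument reduces the theorem to the following local statement: for every $(x^0,\la^0)\in E\times F$ there exist a neighbourhood $U\ni x^0$, an open interval $I\ni\la^0$, a set $E_0\subset\R^+$ of full measure and $h_0\in L^\b$ such that \eqref{res_short} holds for all $K\subset\subset U$ and all $\rho\in C^1_c(I)$. (For fixed $\la^0$ one covers $\bar K$ by finitely many traceability neighbourhoods and intersects the corresponding $\la$--intervals into one interval $I(\la^0)$; the union of the $I(\la^0)$ over $\la^0\in F$ is open of full measure, so after subtracting from $\rho$ a $C^1_c$ bump supported on a $\la$--set of arbitrarily small measure --- whose contribution to \eqref{res_short} is at most a constant times that measure --- the remainder of $\rho$ is supported in finitely many $I(\la^0)$, and one splits with a $C^1$ partition of unity in $\la$ and a partition of unity in $x$.) By uniqueness of the weak--$\star$ limit, the local $h_0$'s are all restrictions of the single weak trace provided by Proposition \ref{wt}, so they patch into a global $h_0\in L^\b(\R^d\times\R)$.

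If alternative 1) of Definition \ref{d-assump} holds at $(x^0,\la^0)$, I would repeat, for equation \eqref{opsta} itself, the non--degenerate part of the proof of Theorem \ref{t-homog}. Applying the scaling \eqref{*} to \eqref{opsta} yields an equation of the form \eqref{lhm}: the right--hand side carries the prefactors $\eps_m$, $\sqrt{\eps_m}$ and hence tends to $0$ strongly in ${\cal M}(\R^d_+)$, exactly as in Lemma \ref{llum}, while the coefficients $F^m,b^m$ converge locally uniformly to $F(0,y,\cdot),b(0,y,\cdot)$. The only new feature relative to \eqref{homogena} is that $F_{k+1},\dots,F_d$ may depend on $(t,x)$, but they still freeze at $(0,y)$ in the limit, which does not affect the argument. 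With $\tilde h_0$ the weak--$\star$ limit of $(h^m)_m$, the sequence $w_m=h^m-\tilde h_0$ satisfies the assumptions of Theorem \ref{main-result} with $\tilde F=F(0,y,\cdot)$, $\tilde b=b(0,y,\cdot)$, which are non--degenerate on $I$ in the sense of \eqref{kingnl} by alternative 1); hence $\int_I h^m\rho\,d\la\to\int_I\tilde h_0\rho\,d\la$ in $L^1_{\rm loc}$ for every $\rho\in C^1_c(I)$. By Lemma \ref{llum}, $\tilde h_0$ solves the frozen linear ultra--parabolic equation; that equation has a uniquely solvable Cauchy problem, its weak trace at $\hat t=0$ equals $h_0(y,\la)$ by Proposition \ref{wt}, and the stationary function $\la\mapsto h_0(y,\la)$ is a solution with precisely that trace, whence $\tilde h_0\equiv h_0(y,\la)$ --- this is the identification made in \cite[Theorem 2]{pan_trace}. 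Thus $\int h^m\rho\,d\la\to\int h_0(y,\la)\rho\,d\la$ for a.e.\ $y$, which by the discussion following \eqref{scal} is the averaged trace near $x^0$.

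If alternative 2) holds, I would first verify that the regular change of variables $(\hat t,\hat x)$ turns \eqref{opsta}, written in divergence form, into an equation of type \eqref{homogena}: differentiating $\sum_{i,j\le k}\pa^2_{x_i x_j}(b_{ij}h)$ through a nonlinear change of variables generates additional lower--order terms, and \eqref{kapaF} is exactly the condition that, once these are collected, the new flux components $\hat F_{k+1},\dots,\hat F_d$ become the functions $p_s(\la)$ of $\la$ alone; \eqref{t-coord} lets us divide by the (constant, nonzero) coefficient of $\pa_{\hat t}$; and \eqref{kapab}, together with the Remark after Definition \ref{d-assump}, keeps the new diffusion matrix $\hat b$ of the structure required in \eqref{3}. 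That same Remark makes it clear that $\hat h(\hat t,\hat x,\la):=h(t,x,\la)$ still obeys \eqref{assum}: since $\hat x_0,\hat x_{k+1},\dots,\hat x_d$ do not involve $x_1,\dots,x_k$, the Jacobian is block--triangular and $\pa_{\hat x_i}\hat h$ ($i\le k$) is a smooth--coefficient combination of the measures $\pa_{x_r}h$ ($r\le k$), the remaining conditions being obvious. Then I would apply Theorem \ref{t-homog} to $\hat h$; since that theorem is proved by blow--up, which is local in $x$, only the validity of the transformed equation on a neighbourhood of $(0,x^0)$ is needed. Finally I would transfer the averaged trace of $\hat h$ at $\hat t=0$ back to $h$ at $t=0$: near $x^0$ the change of variables sends $t\to 0^+$ to $\hat t\to 0^+$ and is bi--Lipschitz in the space variables with Jacobians continuous in the (small) time, so $L^1_{\rm loc}$--convergence of the velocity averages is preserved, and, by Proposition \ref{wt}, the limit is once more the weak trace $h_0$.

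The main obstacle is alternative 2), on two counts. The first is the algebra: tracking precisely the lower--order terms created when the nonlinear change of variables acts on the second--order divergence operator, so as to confirm that \eqref{kapaF}--\eqref{kapab} yield exactly the normal form \eqref{homogena}. The second, and the genuinely subtle point, is the transfer back, because Theorem \ref{t-homog} is built on the anisotropic ultra--parabolic scaling $t\mapsto\eps t$, $\bar x\mapsto\sqrt{\eps}\bar x$, $\tilde x\mapsto\eps\tilde x$, and one must check that the nonlinear, $t$--dependent change of variables is compatible with it. Here the block structure of the Jacobian forced by \eqref{kapab} and the Remark is essential: it makes the change of variables respect that scaling to leading order, so that a blow--up of $h$ in the original coordinates corresponds, in the limit, to a blow--up of the transformed equation, which is precisely the situation Theorem \ref{t-homog} covers.
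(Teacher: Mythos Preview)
Your proposal is correct and follows essentially the same route as the paper: localise, treat alternative~1) by rerunning the non-degenerate blow-up argument from Theorem~\ref{t-homog} (with coefficients frozen at $(0,y)$), treat alternative~2) by carrying out the change of variables to reduce to \eqref{homogena} and invoking Theorem~\ref{t-homog}, then patch the local traces via Proposition~\ref{wt}. The paper does exactly this, writing out the chain-rule computation \eqref{tr1}--\eqref{tr2} that you identify as the first obstacle and arriving at the normal form \eqref{reduced}.

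One remark on your second obstacle: you are overcomplicating the transfer back. There is no need to check that the nonlinear change of variables is compatible with the anisotropic blow-up scaling, because Theorem~\ref{t-homog} is applied \emph{as a black box} in the $(\hat t,\hat x)$ coordinates: it delivers an averaged trace for $\hat h$ at $\hat t=0$, and since the map $(t,x)\mapsto(\hat t,\hat x)$ is a regular diffeomorphism of a neighbourhood of $\{0\}\times U$ onto a neighbourhood of $\{0\}\times\hat U$ (recall the Remark following the theorem: one may take $\hat t=t$), $L^1_{\rm loc}$-convergence of the velocity averages transfers directly. The block structure of the Jacobian matters for preserving \eqref{assum} and for obtaining the form \eqref{homogena}, as you note, but not for the transfer itself.
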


\begin{proof}
Fix $(x^0,\lambda^0)\in \R^{d+1}$ and the neighborhood $U\times
(\alpha,\beta)$, $U= U(x^0)$, so that the strong traceability
property is fulfilled. If the flux $(1, F(0,x,\la))$ and diffusion
matrix $b(0,x,\la)$ are non-degenerate in that neighborhood, then we
repeat the first part of the proof of Theorem \ref{t-homog} to
conclude that the function $h$ admits an averaged trace on $U\times
(\alpha,\beta)$.

Consider now the change of variables from  the second condition from the strong traceability
definition. Using the same notation as in Definition \ref{d-assump} we calculate
\begin{align}
\label{tr1}
 &  
\sum_{i=0}^d \pa_{x_i} (F_i h) =  \sum_{i=0}^d \sum_{s=0}^d \frac{\pa(F_i h)}{\pa \hat x_s} \frac{\pa \hat x_s}{\pa {x_i}} =\\
& = 
\sum_{s=0}^d \pa_{\hat x_s} [\hat F_s h]
- h  \sum_{i,s=0}^d F_i \, \pa_{\hat x_s} \left[  \frac{\pa \hat x_s}{\pa {x_i}} \right]\nonumber,
\end{align}
where $\hat F=(\hat F_0,...,\hat F_d)$, and


\begin{align}
 \label{tr2}
&\sum_{i,j=1}^k \pa^2_{x_i x_j} (b_{ij} h)
=\sum_{i,j=1}^k \left(  \sum_{s,r=0}^d  \frac{\pa^2(b_{ij} h)}{\pa \hat x_s \pa \hat x_r } \frac{\pa \hat x_s}{\pa {x_i}} \frac{\pa \hat x_r}{\pa {x_j}}
+ \sum_{s=0}^d \frac{\pa(b_{ij} h)}{\pa \hat x_s } \frac{\pa^2 \hat x_s}{\pa {x_i} \pa x_j}
 \right)=\\
\nonumber
&=  \sum_{s,r=0}^d \pa^2_{ \hat x_s \hat x_r} [\hat b_{sr} h ] + \sum_{s=0}^d \pa_{ \hat x_s }\left[
h\sum_{i,j=1}^k b_{ij} \cdot\left(
\frac{\pa^2 \hat x_s}{\pa {x_i} \pa x_j} -
2\sum_{r=0}^d \pa_{\hat x_r}\left[
 \frac{\pa \hat x_s}{\pa {x_i}} \frac{\pa \hat x_r}{\pa {x_j}}
\right]
\right)
 \right]\\
 \nonumber
 & +h \sum_{i,j=1}^k  b_{ij}  \left( \sum_{s,r=0}^d
  \pa^2_{ \hat x_s \hat x_r}
 \left[
 \frac{\pa \hat x_s}{\pa {x_i}} \frac{\pa \hat x_r}{\pa {x_j}}
 \right]
 - \sum_{s=0}^d
 \pa_{\hat x_s} \left[
 \frac{\pa^2 \hat x_s}{\pa {x_i} \pa x_j}
 \right]
 \right)
\end{align}
By inserting \eqref{tr1} and \eqref{tr2} into \eqref{opsta}, we get from the strong traceability
conditions (keep in mind the fact that the matrix $B$ is symmetric)
\begin{align}
\label{reduced}
&\pa_{\hat t} [\hat F_0 h]
+ \sum\limits_{s=1}^k \pa_{\hat{x}_s}
\left[
h \cdot
\left(
\hat{F}_s -\sum_{i,j=1}^{k} b_{ij}
\left(
2\sum_{r=1}^k \pa_{\hat{x}_r}
\left[ \frac{\pa \hat x_s}{\pa {x_i}} \frac{\pa \hat x_r}{\pa {x_j}} \right]
- \frac{\pa^2 \hat x_s}{\pa {x_i} \pa {x_j}}
\right)
\right)
\right]\\
& +\sum\limits_{s=k+1}^d
\pa_{\hat{x}_s} [h \cdot p_s(\lambda)]
-\sum\limits_{s,r=1}^k \pa^2_{\hat{x}_s \hat{x}_r} [h\cdot \hat{b}_{sr}] =h \cdot g(\hat t, \hat{x},\lambda) +\pa_\lambda \gamma(\hat t, \hat x,\la),  \nonumber
\end{align}
for $(\hat t, \hat x,\lambda) \in (0, \hat T)\times\hat U \times (\alpha,\beta)$, and
appropriate function $\hat{g}$.
Equation \eqref{reduced} has the form of equation \eqref{homogena}
and thus, according to Theorem \ref{t-homog}, $\hat{h}:=h(\hat t,
\hat x, \la)$ admits averaged traces on $\hat U\times
(\alpha,\beta)$. Since the transformation $(\hat t, \hat{x})$ is
regular, we conclude that the averaged traces exist on $U\times
(\alpha,\beta)$ as well.

Since the averaged traces exist in a neighborhood of almost every
point $(x_0,\lambda_0)$, by a diagonalization procedure, we conclude
that they exist globally.  
\end{proof}


\begin{remark}

Notice that from the proof of Theorem \ref{t-homog} it
follows that it is enough to demand here the strong traceability
conditions for the flux and diffusion of the form
$$
(1,F(0,x,\lambda)) \ \ {\rm and} \ \
(b_{sj}(0,x,\lambda))_{s,j=1,\dots k}
$$ which implies that in the change of variables given 
in the strong
traceability conditions we can take $\hat{t}=t$. This implies that
condition \eqref{t-coord} can be omitted.

Again, we would like to thank to the referee for this remark.
\end{remark}

The idea of reduction of the space dimension and the
previous theorem can be used to weaken the strong traceability conditions.

\begin{definition}
\label{d11} We say that the coefficients of equation \eqref{opsta}
satisfy {weak traceability conditions} at the point
$(x^0,\lambda^0)\in \R^d\times \R$ if there exists a neighborhood
$(0,T)\times U\times (\alpha,\beta)$ of the point
$(0,x^0,\lambda^0)$, and a regular transformation $\hat{x}:
(0,T)\times U\to (0,\hat T)\times \hat{U}$ which reduces equation
\eqref{opsta} on
\begin{align*}
\pa_{\hat{t}} \hat{h} &+ \sum\limits_{i=l}^d
\pa_{\hat{x}_i}(\hat{F}_i(\hat{t},\hat{x},\lambda)\hat{h})
=\sum\limits_{i,j=1}^k \pa^2_{\hat{x}_i
\hat{x}_j}\left(\hat{b}_{ij}(\hat t,\hat x,\lambda) \hat{h}
\right)\\&+\sum\limits_{m=1}^k\pa_{x_m}(\hat{F}(\hat{t},\hat{x},\lambda)\hat{h})
+ \hat{g}(\hat{t},\hat{x},\lambda)\hat{h}+\pa_\lambda
 \hat{\gamma}(\hat{t},\hat{x},\lambda),
\end{align*} where $ k< l\leq d$, and the last equation satisfies the strong
traceability conditions on $(0,\hat T)\times \hat{U} \times
(\alpha,\beta)$ with $\hat{x}_{k+1},\dots,\hat{x}_{l-1}$ considered
as parameters. The functions $\hat{F}$, $\hat{b}$, and $\hat{g}$ are
computed as in the proof of Theorem \ref{mainalbas}, while
$\hat{\gamma}$ equals to $\gamma((t,x)(\hat{t},\hat{x}),\lambda)$.
\end{definition}


The following theorem is a consequence of the induction arguments
and Proposition \ref{th3mod}.

\begin{theorem}\label{mainalbas2}
       Let  $h\in L^\infty(\R^+\!\!\times \!\R^d\!\times \!\R)$
       be a weak solution to \eqref{opsta}. Moreover, assume that equation \eqref{opsta}
       satisfies the weak traceability conditions from Definition \ref{d11}. Then, there exists a function $h_0\in
       L^\infty(\R^d\times \R)$ such that \eqref{res_short} holds.
\end{theorem}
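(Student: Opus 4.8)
The plan is to derive Theorem \ref{mainalbas2} from Theorem \ref{mainalbas} combined with the dimension-reduction trick already used in Step 2 and Step 3 of the proof of Theorem \ref{t-homog}. By the weak traceability assumption, after a regular change of variables $\hat{x}\colon (0,T)\times U\to(0,\hat T)\times\hat U$ around (almost) every point $(x^0,\lambda^0)$, the equation \eqref{opsta} becomes an equation in which the only "bad" fluxes in the directions $\hat{x}_{k+1},\dots,\hat{x}_{l-1}$ are absent (the sum in the transformed equation runs from $i=l$), so these variables appear only as parameters. Concretely, I would first record that the transformed equation has exactly the form of \eqref{homogena} — with the extra lower-order terms $\sum_{m=1}^k\pa_{x_m}(\hat F\hat h)$ and $\hat g\hat h$, which are harmless: the zeroth-order term $\hat g\hat h$ contributes to the $\pa_\lambda\hat\gamma$-type remainder after absorbing it (or, more simply, Theorem \ref{t-homog} and Theorem \ref{mainalbas} are stated to tolerate exactly such terms, cf. \eqref{reduced}), and the $\sqrt{\eps_m}$-scaling in \eqref{lhm} kills the first-order terms in $\hat x_1,\dots,\hat x_k$ just as in Lemma \ref{llum}.

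Next, invoking Proposition \ref{th3mod} (or rather its obvious extension to several parameters: testing against products $\phi\psi\rho$ with $\psi\in L^\infty_c(\R^{l-1-k})$ and using absolute continuity of the resulting measures in the $\hat x_{k+1},\dots,\hat x_{l-1}$ variables), I would conclude that for almost every fixed value of $(\hat x_{k+1},\dots,\hat x_{l-1})$ the function $\hat h$, viewed as a function of $(\hat t,\hat x_1,\dots,\hat x_k,\hat x_l,\dots,\hat x_d,\lambda)$, is a weak solution of an equation which satisfies the strong traceability conditions. By Theorem \ref{mainalbas}, each such slice admits an averaged trace: there is $\tilde h_0$ (depending on the frozen parameters) with
$$
L^1_{\rm loc}-\lim_{t_m\to 0}\int_{\R}\hat h(t_m,\hat x',\lambda)\rho(\lambda)\,d\lambda=\int_{\R}\tilde h_0(\hat x',\lambda)\rho(\lambda)\,d\lambda,
$$
for $\hat x'$ collecting the non-frozen spatial variables. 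Then I would reinstate the parameter variables by the same Lebesgue-point selection argument as in Step 3 of the proof of Theorem \ref{t-homog}: choose a sequence $t_r\in E$ along which $(t_r,\hat x',\lambda)$ is a Lebesgue point of the slice for a.e. value of the frozen coordinates, freeze those coordinates in the full-measure set where the inductive conclusion holds, and apply the Lebesgue dominated convergence theorem to upgrade $L^1_{\rm loc}$-convergence in $\hat x'$ to $L^1_{\rm loc}$-convergence in all of $\hat x$. Pulling back through the regular change of variables gives the averaged trace in the original $x$, and since this holds in a neighborhood of almost every $(x^0,\lambda^0)$, a diagonalization/partition-of-unity argument produces the global $h_0\in L^\infty(\R^d\times\R)$ of \eqref{res_short}, with uniqueness of the trace coming from Proposition \ref{wt}.

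The step I expect to be the main obstacle is the bookkeeping in the parameter-freezing argument: one must choose the single zero-sequence $t_r$ and the full-measure set of frozen parameters $(\hat x_{k+1},\dots,\hat x_{l-1})$ simultaneously, so that (a) for each frozen value the sliced equation genuinely satisfies the strong traceability conditions on a set of full measure in the remaining variables, (b) the selected $t_r$ are Lebesgue points for all the relevant slices at once, and (c) the limit $\tilde h_0$ assembled from the slices is measurable and lies in $L^\infty$ of the product space — this is precisely where the nested full-measure sets $E$, $\mathcal M$, $\mathcal M_t$, $\mathcal Z$ of Step 3 reappear, now with $l-1-k$ parameters instead of one. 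The transformed lower-order term $\hat g\hat h$ also needs a brief remark: either one notes that the arguments for Theorems \ref{t-homog} and \ref{mainalbas} are insensitive to a bounded zeroth-order coefficient (the blow-up scaling \eqref{lhm} leaves it bounded, and it does not affect condition \eqref{kingnl}), or one absorbs it into a modified measure remainder. Everything else is a routine repetition of the machinery already assembled in Sections 3 and 4.
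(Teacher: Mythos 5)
Your proposal follows essentially the same route as the paper, which in fact offers no detailed argument for Theorem \ref{mainalbas2} beyond the one-line remark that it ``is a consequence of the induction arguments and Proposition \ref{th3mod}.'' Your elaboration --- change of variables from Definition \ref{d11}, slicing in the parameter directions $\hat{x}_{k+1},\dots,\hat{x}_{l-1}$ via (an extension of) Proposition \ref{th3mod}, applying Theorem \ref{mainalbas} to each slice, and reassembling with the Lebesgue-point/dominated-convergence argument of Step 3 of Theorem \ref{t-homog} --- is a faithful and correctly flagged expansion of exactly that intended proof.
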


\section{Nonlinear ultra-parabolic equations}
In this section, we shall consider the following equation
\begin{equation}\label{opsta1}
     \pa_t u+\Div_x f(t,x,u)-\sum_{i,j=1}^k \pa^2_{x_i x_j}(B_{ij}(t,x,u))=0,
\end{equation}
where $-M\leq u \leq M$, $f\in C^1(\R^d_+\times\R;\R^d)$, $F=\pa_u
f\in C^1(\R^{d+1}_+)$ and $b_{ij}= \pa_u B_{ij} \in
C^1(\R_+^{d+1})$, $i,j=1,...,k$ satisfy \eqref{3}-\eqref{2h}. The
latter equation admits several weak solutions and it is usual to
impose a conditions that solutions must satisfy in order to be
physically relevant. Such conditions are called the entropy
admissibility conditions \cite{CK,PC,pan_JMS}, and they represent an
extension of the Kruzhkov admissibility concept for scalar
conservation laws \cite{kru}, cf. \cite{CK,PC,pan_JMS}. Their
natural generalization is quasi-solutions concept introduced in
\cite{pan_trace} for scalar conservation laws. We shall adapt it
here to our situation. Before that, remark that since $f\in
C^2(\R^d\times\R;\R^d)$, the first order derivatives of $f$ are
locally bounded and we have that


       $$
          \sum\limits_{i=1}^d D_{x_i}f_i(t,x,c)=\gamma_c\in{\cal M} (\R^d), \quad c\in\R,
       $$
       where ${\cal M}$ stands for locally finite Borel measures.


\begin{definition}
We say that $u\in L^\infty(\R^d_+)$ is a {quasi solution} to
\eqref{opsta1} if it satisfies
\begin{equation}
\label{cond_1} \sum\limits_{i=1}^k \sigma ^b_{li}(u)\pa_{x_i} u \in
L^2(\R^d_+), \ \ l=1,\dots,k,
\end{equation} and for almost every $c\in \R$, $(t,x)\in\R^d_+$, there exists a
locally finite Borel measure $\mu_c\in {{\cal M}}(\R^d_+)$, such
that
             \begin{align}
             \label{cond_3}
                   L_u^c(t,x)& \equiv  \pa_t|u-c|+\Div \left[{\rm sgn}(u-c)
                    (f(t,x,u)-f(t,x,c))\right]\\
                    &-\sum\limits_{i,j=1}^k \pa^2_{x_i x_j}\left({\rm sgn}(u-c)(B_{ij}(t,x,u)- B_{ij}(t,x,c) ) \right)
                    =-\mu_c(t,x), \nonumber
             \end{align}
             in ${\cal D}'(\R^d_+)$. The family of measures $\mu_c$, $c\in\R$, is called the entropy defect-measure corresponding to $u$.
\end{definition}

Putting $c>\|u\|_\b$ in \eqref{cond_3}, it follows that there exists
$\mu\in {{\cal M}}({\R^d_+})$, such that
      \begin{equation}\label{cond_31}
                 \pa_t u+\Div_x f(t,x,u)-\sum_{i,j=1}^k \pa^2_{x_i x_j}(B_{ij}(t,x,u))=-\mu,
      \end{equation}
       in ${\cal D}'(\R^d_+).$

\begin{lemma}\label{sab}
       If $u$ is a quasi  solution to \eqref{opsta1}, then
       the cut-off function
       $$
         s_{a,b}(u)(t,x)=\max \{a, \min\{u(t,x), b\}\},
       $$
       for $a,b \in \R$, $a<b$, is a quasi  solution to \eqref{opsta1} as well.
\end{lemma}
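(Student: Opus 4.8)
The plan is to verify directly that $v:=s_{a,b}(u)$ satisfies the two requirements for a quasi solution. Since for a quasi solution the entropy defect measures in \eqref{cond_3} need not have a sign, it suffices to check: (i) condition \eqref{cond_1} for $v$; and (ii) that for a.e.\ $c\in\R$ the distribution $L_v^c$ is a locally finite Borel measure on $\R^d_+$. Throughout I use that $v$ takes values in $[a,b]$, that $s_{a,b}$ is nondecreasing, $1$-Lipschitz with $s_{a,b}'=\mathbf 1_{(a,b)}$ a.e., and $s_{a,b}(\lambda)=\lambda$ on $[a,b]$; in particular $\mathbf 1_{\{\lambda<v\}}=\mathbf 1_{\{\lambda<u\}}$ for $\lambda\in(a,b)$, whence the ``layer--cake'' identities $v=a+\int_a^b\mathbf 1_{\{\lambda<u\}}\,d\lambda$, $f(t,x,v)=f(t,x,a)+\int_a^bF(t,x,\lambda)\mathbf 1_{\{\lambda<u\}}\,d\lambda$, and the same with $B_{ij},b_{ij}$ in place of $f,F$. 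Part (i) is the chain rule: exactly as in the scalar theory (cf.\ \cite{CK,PC,pan_JMS}), $\sum_{i=1}^k\sigma^b_{li}(v)\pa_{x_i}v=\mathbf 1_{\{a<u<b\}}\sum_{i=1}^k\sigma^b_{li}(u)\pa_{x_i}u$, which lies in $L^2(\R^d_+)$ by \eqref{cond_1} for $u$.

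For (ii) the key is a pointwise identity. Splitting into the cases $u\le a$, $a\le u\le b$, $u\ge b$, one checks that for every $c\in(a,b)$
\begin{align*}
|v-c|&=|u-c|-(u-b)^+-(a-u)^+,\\
\sgn(v-c)\big(f(t,x,v)-f(t,x,c)\big)&=\sgn(u-c)\big(f(t,x,u)-f(t,x,c)\big)\\
&\quad-\big(f(t,x,u)-f(t,x,b)\big)\mathbf 1_{\{u>b\}}-\big(f(t,x,a)-f(t,x,u)\big)\mathbf 1_{\{u<a\}},
\end{align*}
and the second identity holds verbatim with $B_{ij}$ in place of $f$. Substituting these into $L_v^c$ and regrouping gives $L_v^c=L_u^c-\mathcal E_b(u)-\mathcal E_a(u)$, where $\mathcal E_b(u)$ (resp.\ $\mathcal E_a(u)$) is the left-hand side of the Kruzhkov-type entropy balance for the convex entropy $(u-b)^+$ (resp.\ $(a-u)^+$). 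By hypothesis $L_u^c=-\mu_c$ is a measure; and since $(u-b)^+=\tfrac12(|u-b|+u-b)$, $(a-u)^+=\tfrac12(|u-a|+a-u)$, a short computation combining \eqref{cond_3} at the levels $b$ and $a$ with \eqref{cond_31} reduces $\mathcal E_b(u)$ and $\mathcal E_a(u)$ to combinations of $\mu_a,\mu_b,\mu$, the measures $\Div_xf(t,x,a),\Div_xf(t,x,b)$, and $\sum_{ij}\pa^2_{x_ix_j}B_{ij}(t,x,a),\sum_{ij}\pa^2_{x_ix_j}B_{ij}(t,x,b)$ — all locally finite measures under the standing regularity assumptions. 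Hence $L_v^c$ is a measure for a.e.\ $c\in(a,b)$. For $c\ge b$ (and symmetrically $c\le a$) one has $|v-c|=c-v$ and $\sgn(v-c)\equiv-1$ off $\{v=c\}$, where the entropy fluxes vanish, so $L_v^c=-\big(\pa_tv+\Div_xf(t,x,v)-\sum_{ij}\pa^2_{x_ix_j}B_{ij}(t,x,v)\big)+\big(\Div_xf(t,x,c)-\sum_{ij}\pa^2_{x_ix_j}B_{ij}(t,x,c)\big)$; the second parenthesis is a measure by coefficient regularity, and the first is too — insert the layer-cake identities and $\mathbf 1_{\{\lambda<u\}}=\tfrac12(1+\sgn(u-\lambda))$, use the $\lambda$-differentiated form of \eqref{cond_3} (which yields $\pa_t\sgn(u-\lambda)+\Div_x(F(\lambda)\sgn(u-\lambda))-\sum_{ij}\pa^2_{x_ix_j}(b_{ij}(\lambda)\sgn(u-\lambda))=\pa_\lambda\mu_\lambda$), and integrate in $\lambda$ over $(a,b)$ to pick up the measure $\mu_b-\mu_a$.

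The only genuinely delicate point — and the main obstacle — is that the argument invokes \eqref{cond_3} at the \emph{fixed} levels $a$ and $b$, while \eqref{cond_3} is postulated only for a.e.\ level. This is handled via the equivalent kinetic description of a quasi solution: there is a single locally finite Borel measure $m$ on $\R^d_+\times\R$ with $\mu_c=\int_{\{\lambda>c\}}m(\cdot,\cdot,d\lambda)$, so \eqref{cond_3} in fact holds for \emph{every} $c$; moreover the kinetic functions obey $\chi_{s_{a,b}(u)}=\mathbf 1_{(a,b)}(\lambda)\chi_u+c_{a,b}(\lambda)$ with $c_{a,b}$ depending only on $a,b$, and multiplying the kinetic equation for $u$ by $\mathbf 1_{(a,b)}(\lambda)$ produces the kinetic equation for $s_{a,b}(u)$ with kinetic measure the restriction of $m$ to $\R^d_+\times(a,b)$ plus the two boundary slices $m|_{\{\lambda=a\}}$, $m|_{\{\lambda=b\}}$ — still locally finite measures. (Equivalently, approximate $a,b$ by admissible levels and pass to the limit, using $s_{a',b'}(u)\to s_{a,b}(u)$ and dominated convergence.) Granting this, and the routine fact that $\Div_xf(t,x,c)$ and $\sum_{ij}\pa^2_{x_ix_j}B_{ij}(t,x,c)$ are locally finite measures — already implicit in the derivation of \eqref{cond_31} — the remaining work is only the case analysis behind the pointwise identities and the bookkeeping in $\mathcal E_a,\mathcal E_b$, which is routine.
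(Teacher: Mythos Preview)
Your argument is correct and follows the same underlying idea as the paper --- express the Kruzhkov entropy fluxes for $v=s_{a,b}(u)$ as a finite linear combination of Kruzhkov entropy fluxes for $u$ at the levels $a$, $b$, and (a truncation of) $c$, then read off that $L_v^c$ is a measure. The organization, however, differs. The paper obtains a single symmetric identity valid for \emph{all} $c$: with $c':=s_{a,b}(c)$ and $S(w,c):=\sgn(w-c)\big(F(\cdot,w)-F(\cdot,c)\big)$ for any continuous $F$, one checks
\[
S(v,c)=S(u,c')-\tfrac12\big(S(u,a)+S(u,b)\big)+\tfrac12\big(S(a,c)+S(b,c)\big),
\]
so that $L_v^c=-\mu_{c'}+\tfrac12(\mu_a+\mu_b)$ plus explicit divergence/second-order terms at the fixed levels $a,b,c$. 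This avoids your separate case analysis for $c\notin(a,b)$ and the layer-cake/kinetic detour you use there. On the other hand, you are more careful than the paper on one point: you note that \eqref{cond_3} is only assumed for a.e.\ level, so invoking $\mu_a,\mu_b$ at the specific endpoints needs justification (via the kinetic formulation or approximation by admissible levels); the paper applies \eqref{cond_3} at $a,b$ without comment.
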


\begin{proof}
Denote, $v=v(t,x)=s_{a,b}(u(t,x))$ and $c'=\max \{a, \min \{c, b\}\}$. Notice that
         \begin{equation}
\label{cutoff}
           s'_{a,b}(\lambda)=\begin{cases} 1, & a<\lambda <b\\
           0, & \la<a \mbox{ or } \la>b
           \end{cases}.
         \end{equation}
For a continuous function $F=F(t,x,\la)$,  denote $S(v,c)= \sgn(v-c) (F(t,x,v)-F(t,x,c))$. One can verify that

\begin{eqnarray*}
S(v,c)  & =\begin{cases}
S(u, c') - \frac 12 \Big( S(u,a) + S(u,b) \Big)  + \frac 12 \Big( F(a)+F(b) \Big)- F(c), & c<a\\
S(u, c') - \frac 12 \Big( S(u,a) + S(u,b) \Big) + \frac 12 \Big( F(b)-F(a)\Big) , & a<c<b\\
S(u, c') - \frac 12 \Big( S(u,a) + S(u,b) \Big)  - \frac 12 \Big( F(a)+F(b) \Big)  +F(c), & b<c
                        \end{cases}\\
                        &= S(u, c') - \frac 12 \Big( S(u,a) + S(u,b) \Big)  + \frac 12 \Big( S(a,c)+S(b,c) \Big) \qquad \qquad \qquad \qquad
\end{eqnarray*} The same arguing holds for $\sgn(v-c)
(b(t,x,v)-b(t,x,c))$. This enables us to conclude that
\begin{eqnarray*}
               L_v^c & =-\mu_{c'}+\frac12\Big(\mu_a+\mu_b +
               \sgn (a-c)( \gamma_a-\gamma_c) + \sgn (b-c) (\gamma_b-\gamma_a) \Big),\\
               & =   \begin{cases}
-\mu_{c'}+\frac12(\mu_a+\mu_b  +\gamma_b+\gamma_a) - \gamma_c,  & c<a\\
-\mu_{c'}+\frac12(\mu_a+\mu_b  +\gamma_b-\gamma_a), & a<c<b\\
-\mu_{c'}+\frac12(\mu_a+\mu_b  -\gamma_b-\gamma_a) + \gamma_c, & b<c
                        \end{cases}     \qquad          \qquad      \qquad      \quad
\end{eqnarray*}
which proves that $v$ is a quasi  solution to \eqref{homogena}.
\end{proof}

A simple consequence of \eqref{3} and \eqref{cond_1} is the
following lemma.

\begin{lemma} \label{regul}
      Let $u$ be an quasi solution to \eqref{opsta1}.
      Then, for every $j\in \N$ and any $a<b$ such that $(a,b)\subset  (\lambda_j,\lambda_{j+1})$, $m\in \N$, ($\lambda_j$ are given in \eqref{4})
      $$
          \pa_{x_i}s_{a,b}(u) \in L^2_{\rm loc}(\R_d^+), \quad i=1,\dots, k.
      $$
\end{lemma}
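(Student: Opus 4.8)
The plan is to localize on a single slab $(a,b)\subset(\lambda_j,\lambda_{j+1})$ where, by \eqref{4}, the diffusion is genuinely parabolic: there is a constant $c_{ab}=\min_{\lambda\in[a,b]}c(\lambda)>0$ such that $\langle b(t,x,\lambda)\xi,\xi\rangle\ge c_{ab}|\xi|^2$ for all $\xi\in\R^k$ and all $\lambda\in[a,b]$. By Lemma \ref{sab}, $v:=s_{a,b}(u)$ is again a quasi solution to \eqref{opsta1}, so in particular it satisfies \eqref{cond_1}: $\sum_{i=1}^k\sigma^b_{li}(v)\pa_{x_i}v\in L^2_{\rm loc}(\R^d_+)$ for $l=1,\dots,k$. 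The point is to upgrade this ``$\sigma^b$-weighted'' gradient bound into an honest $L^2_{\rm loc}$ bound on $\pa_{x_i}v$, using that on the relevant range of values the matrix $\sigma^b$ is uniformly elliptic.

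First I would note that $v$ takes values only in $[a,b]$, and that $\pa_{x_i}v=\pa_{x_i}v\cdot\mathbf 1_{\{a<u<b\}}$ a.e.\ (by \eqref{cutoff} and the chain rule for Sobolev/BV compositions, $\pa_{x_i}v$ is supported where $u\in(a,b)$). On that set the argument of $\sigma^b$ lies in $(a,b)\subset(\lambda_j,\lambda_{j+1})$, hence $\sigma^b(t,x,u)$ is, at a.e.\ point, a symmetric $k\times k$ matrix whose square is $b(t,x,u)\succeq c_{ab}\,\mathrm{Id}$; therefore $\sigma^b(t,x,u)\succeq \sqrt{c_{ab}}\,\mathrm{Id}$ in the sense of symmetric matrices, so it is invertible with $\|(\sigma^b(t,x,u))^{-1}\|\le c_{ab}^{-1/2}$, uniformly in $(t,x)$ on any relatively compact set (here I use $\sigma^b\in C^1$ and continuity of $c$). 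Writing $G=(G_1,\dots,G_k)$ with $G_l:=\sum_{i=1}^k\sigma^b_{li}(t,x,v)\pa_{x_i}v\in L^2_{\rm loc}$, we recover $\pa_{x_i}v=\sum_{l=1}^k\big[(\sigma^b(t,x,v))^{-1}\big]_{il}G_l$ on $\{a<u<b\}$, and the right-hand side is a bounded-coefficient combination of $L^2_{\rm loc}$ functions, hence $\pa_{x_i}v\in L^2_{\rm loc}(\R^d_+)$ for $i=1,\dots,k$, which is exactly the claim.

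The main obstacle is the justification of the matrix inversion at the level of the chain rule: one must be careful that $\pa_{x_i}s_{a,b}(u)$ genuinely vanishes on $\{u\le a\}\cup\{u\ge b\}$ (so that nothing is lost at the endpoints $\lambda_j,\lambda_{j+1}$ where $c$ may degenerate), and that the pointwise algebraic identity $\sigma^b(t,x,u)^{-1}G=\nabla_x v$ holds a.e.\ rather than merely distributionally. This is handled by the standard approximation: replace $s_{a,b}$ by a smooth truncation $s^\delta$ with $s^\delta{}'$ supported in $(a-\delta,b+\delta)$ and equal to $1$ on $(a,b)$, apply \eqref{cond_1} to the quasi solution $s^\delta(u)$ (legitimate by Lemma \ref{sab} applied on $(a-\delta,b+\delta)\subset(\lambda_j,\lambda_{j+1})$ for $\delta$ small), invert the now-uniformly-elliptic $\sigma^b$ on that slightly larger slab, and let $\delta\to0$, using dominated convergence and the fact that $\mathbf 1_{\{u\in(a-\delta,b+\delta)\}}\to\mathbf 1_{\{u\in(a,b)\}}$ a.e.\ off a set where $\pa_{x_i}u=0$. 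Everything else is routine.
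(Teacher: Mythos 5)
Your proof is correct and takes essentially the same route as the paper, which bounds $\sum_{i=1}^k|\pa_{x_i}s_{a,b}(u)|^2$ by $\max_{a\le\lambda\le b}c(\lambda)^{-1}\sum_{j}\bigl(\sum_i\sigma_{ij}\pa_{x_i}s_{a,b}(u)\bigr)^2$ using \eqref{3}, \eqref{cond_1} and \eqref{cutoff} --- i.e.\ your inversion of $\sigma^b$ expressed in quadratic-form language. The only slip is that $(\sigma^b)^2=b\succeq c_{ab}\,\mathrm{Id}$ with $\sigma^b$ merely symmetric yields $\|(\sigma^b)^{-1}\|\le c_{ab}^{-1/2}$ but not $\sigma^b\succeq\sqrt{c_{ab}}\,\mathrm{Id}$ (its eigenvalues could be negative); since you only use the former, nothing breaks.
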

\begin{proof}

         From \eqref{3}, \eqref{cond_1}, and \eqref{cutoff}, we conclude,
         \begin{align*}
                  \sum\limits_{i=1}^k|\pa_{x_i}s_{a,b}(u)|^2 & \leq
                  \frac{s'_{a,b}(u)}{c(u)}\sum\limits_{i,j=1}^k b_{ij}(t,x,u) u_{x_i}
                  u_{x_j}\\
                  & \leq \max_{a\leq \lambda \leq b}(c(\lambda))^{-1}\sum\limits_{j=1}^k\left(\sum\limits_{i=1}^k \sigma_{ij}\pa_{x_i}
                  s_{a,b}(u)
                  \right)^2\in L^1_{\rm loc}(\R^+_d),
         \end{align*}
         where the last relation follows from \eqref{3}.  This concludes the proof.
   \end{proof}

$$\;$$

We shall need the following characterization of entropy solutions to
\eqref{opsta1} (see also \cite{Dal}).

\begin{proposition}\cite{PC}\label{prop-kin}
        The function $u$ represents a quasi  solution to
        \eqref{opsta1} if and only if the kinetic function
        \begin{equation}
        \label{kinh}
        h(t,x,\lambda)=\begin{cases} 1, & 0\leq \lambda \leq u(t,x)\\
                                    -1, & u(t,x) \leq \lambda \leq 0\\
                                     0, & otherwise
                        \end{cases}
        \end{equation}
        satisfies the following linear equation,
        \begin{equation}\label{kin_1h}\begin{split}
        &\pa_th(t,x,\lambda)+\Div \left[\pa_\lambda f(t,x,\lambda) h(t,x,\lambda)\right]\\
                    & -\sum\limits_{i,j=1}^k \pa^2_{x_i x_j}  \left( b_{ij}(t,x,\lambda)  h(t,x,\lambda) \right) =-\pa_\lambda m(t,x,\lambda),
               \quad \mbox{ in } {\cal D}'(\R^d_+\times\R),
        \end{split}\end{equation}
        where $m\in {{\cal M}}(\R^d_+\times \R)$ and $b_{ij}=\pa_\lambda B_{ij}$.
\end{proposition}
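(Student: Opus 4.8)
The plan is to match the two formulations through the algebraic identity $h(t,x,\lambda)=\chi(\lambda,u(t,x))$, where $\chi(\lambda,v)=\mathbf 1_{\{0<\lambda<v\}}-\mathbf 1_{\{v<\lambda<0\}}$; one has $\pa_\lambda\chi(\lambda,v)=\delta_0(\lambda)-\delta_v(\lambda)$ and, for every $S\in C^1(\R)$ and every continuous $G=G(t,x,\lambda)$,
\[
  \int_\R S'(\lambda)\,G(t,x,\lambda)\,\chi(\lambda,u(t,x))\,d\lambda=\int_0^{u(t,x)}S'(\lambda)\,G(t,x,\lambda)\,d\lambda=:\mathcal G^{S}(t,x,u(t,x)),
\]
so that $\pa_v\mathcal G^{S}(t,x,v)=S'(v)\,G(t,x,v)$; for $S(\lambda)=\lambda$ and $G=F_i=\pa_\lambda f_i$ (resp.\ $G=b_{ij}=\pa_\lambda B_{ij}$) this gives $f_i(t,x,v)-f_i(t,x,0)$ (resp.\ $B_{ij}(t,x,v)-B_{ij}(t,x,0)$), which is why the coefficients $F$ and $b$ appearing in \eqref{kin_1h} are exactly the $u$-derivatives of the nonlinearities in \eqref{cond_31}. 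This is the kinetic formulation of \cite{PC} in the present ultra-parabolic, heterogeneous framework.

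For the direction ``quasi solution $\Rightarrow$ \eqref{kin_1h}'' I would run the formal chain rule $\pa_th=\delta_u(\lambda)\pa_tu$, $\pa_{x_i}h=\delta_u(\lambda)\pa_{x_i}u$, insert $\pa_tu$ from \eqref{cond_31}, and reorganise using the identity
\[
  F_i(t,x,u)\,\delta_u(\lambda)\,\pa_{x_i}u=\pa_{x_i}\!\big[F_i(t,x,\lambda)\,h\big]-(\pa_{x_i}F_i)(t,x,\lambda)\,h
\]
together with $(\pa_{x_i}F_i)(t,x,\lambda)=\pa_\lambda(\pa_{x_i}f_i)(t,x,\lambda)$ (so the heterogeneity terms $\delta_u(\lambda)\sum_i(\pa_{x_i}f_i)(t,x,u)=\delta_u(\lambda)\gamma_u$ combine, after an integration by parts in $\lambda$, with the $\sum_i(\pa_{x_i}F_i)(t,x,\lambda)\,h$ terms into a $\pa_\lambda$ of a measure), plus the analogous, longer expansion of $\sum_{i,j}\pa^2_{x_ix_j}(b_{ij}(t,x,u))$, whose only genuinely new ingredient is the nonnegative kinetic dissipation $\big(\sum_{i,j}b_{ij}(t,x,u)\,\pa_{x_i}u\,\pa_{x_j}u\big)\,\pa_\lambda\delta_u(\lambda)$ --- meaningful thanks to \eqref{cond_1} and \eqref{3}, since its $\lambda$-primitive is the locally finite nonnegative measure $\big(\sum_{i,j}b_{ij}(t,x,u)\,\pa_{x_i}u\,\pa_{x_j}u\big)\,\delta_u(\lambda)$ controlled by Lemma \ref{regul}. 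Every term is then a $\pa_\lambda$ of a measure, so \eqref{kin_1h} holds with $m$ assembled from the $\lambda$-primitive of $\mu$ (finite because $h$ vanishes for $|\lambda|>M$), from the expressions produced by $\gamma$ and its parabolic analogue, and from the dissipation measure; that $m\in\mathcal M(\R^d_+\times\R)$ uses only $\mu\in\mathcal M$, the local boundedness of the first $x$-derivatives of $f$ and $B$, and \eqref{cond_1}.

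Conversely, let $h$ be given by \eqref{kinh} and solve \eqref{kin_1h} with $m\in\mathcal M(\R^d_+\times\R)$ (and carry the regularity equivalent to \eqref{cond_1}). Testing \eqref{kin_1h} against $\varphi(t,x)\,S'(\lambda)$, $\varphi\in C^2_c(\R^d_+)$, $S\in C^2(\R)$, and integrating in $\lambda$, the duality above turns the left-hand side into $\big\langle\pa_tS(u)+\Div_x\mathcal G^{S}(t,x,u)-\sum_{i,j}\pa^2_{x_ix_j}\mathcal R^{S}_{ij}(t,x,u),\,\varphi\big\rangle$ and the right-hand side into $\big\langle\int_\R S''(\lambda)\,dm(t,x,\lambda),\,\varphi\big\rangle$; as $\varphi$ and $S$ are arbitrary, $u$ obeys a distributional entropy balance for every $C^2$ entropy, with entropy production $\int_\R S''\,dm$. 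Taking smooth convex $S_\varepsilon\to|\,\cdot\,-c|$ with $S_\varepsilon''\to2\delta_c$ weakly-$\star$ recovers \eqref{cond_3} for a.e.\ $c$, with defect $\mu_c$ equal to the disintegration $2\,m(t,x,\cdot)\big|_{\lambda=c}$ corrected by the $x$-derivatives of $f$ and $B$ at $\lambda=c$ and $\lambda=0$ (these appear because the Kruzhkov flux in \eqref{cond_3} is normalised to vanish at $u=c$, while $\mathcal G^{S}$ vanishes at $u=0$); no sign on $\mu_c$ is needed, quasi solutions in the sense of \cite{pan_trace} being precisely those whose entropy defect is merely a measure. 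Finally \eqref{cond_1} is read off from the $\mathcal M(\R_\lambda)$-bound carried by $h$ via $\pa_{x_i}u=\int_\R\pa_{x_i}h\,d\lambda$ and \eqref{3}, as in the proof of Lemma \ref{regul}.

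The main obstacle is the rigorous justification of the second-order chain rule on both sides --- the manipulations of $\pa^2_{x_ix_j}\big(\sgn(u-c)(B_{ij}(t,x,u)-B_{ij}(t,x,c))\big)$ and of $\pa^2_{x_ix_j}(b_{ij}(t,x,\lambda)h)$, and the isolation of the quadratic dissipation. This is the heterogeneous, ultra-parabolic counterpart of the chain rule for degenerate parabolic--hyperbolic equations, and it is exactly where the structural hypotheses \eqref{2h}, \eqref{3} and the a priori regularity \eqref{cond_1} are consumed; a fully rigorous argument passes through a parabolic regularisation and a careful passage to the limit. By contrast the first-order part is the classical Lions--Perthame--Tadmor computation, and, once the chain rule is secured, checking that $m$ is a locally finite Borel measure is routine given $u\in L^\infty$ and the $C^1$-regularity of $f$ and $B$.
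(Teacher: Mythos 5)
The paper offers no proof of this proposition at all: it is imported wholesale from \cite{PC} (Chen--Perthame), adapted to Panov's quasi-solution setting, and the text moves straight on to Theorem \ref{mainalbas1}. So there is nothing internal to compare against; what you have written is a sketch of the standard kinetic-formulation argument, and structurally it is the right one. Your dictionary $h=\chi(\lambda,u)$, the identity $\int S'(\lambda)G(\lambda)\chi(\lambda,u)\,d\lambda=\mathcal G^S(u)-\mathcal G^S(0)$ explaining both why the kinetic coefficients are $\pa_\lambda f$, $\pa_\lambda B$ and where the $\gamma_c$-corrections come from (Kruzhkov fluxes normalised at $u=c$ versus kinetic primitives normalised at $0$), the assembly of $m$ from the $\mu_c$, the heterogeneity terms and the parabolic dissipation $\bigl(\sum_{i,j}b_{ij}\pa_{x_i}u\,\pa_{x_j}u\bigr)\delta_u(\lambda)$ made meaningful by \eqref{cond_1} and \eqref{2h}, and the converse by testing with $\varphi\otimes S'$ and letting $S''_\varepsilon\rightharpoonup 2\delta_c$ for a.e.\ $c$ --- all of this is exactly the content of the cited result. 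You also correctly identify, and honestly flag, the genuinely hard step: the rigorous second-order chain rule isolating the quadratic dissipation, which in \cite{PC} is done through parabolic regularisation. As a proof the proposal is therefore incomplete, but the incompleteness is confined to the step the paper itself outsources.

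Two smaller points deserve care. First, in the forward direction you should not literally write $\pa_{x_i}h=\delta_u(\lambda)\pa_{x_i}u$ for $i>k$: in those directions $u$ has no derivative control, and the clean route is to integrate the family \eqref{cond_3} against test functions of $c$ (using $\eta''=2\delta_c$) rather than to apply a pointwise chain rule to $h$; one also needs local boundedness of $c\mapsto\mu_c(K)$, which follows from the explicit expression for $L^c_u$, in order for $m$ to be a locally finite measure. Second, in the converse direction \eqref{cond_1} cannot be ``read off'' from \eqref{kin_1h} alone: a distributional identity with an arbitrary measure right-hand side gives no $L^2$ bound on $\sum_i\sigma^b_{li}(u)\pa_{x_i}u$ (for entropy solutions this comes from $m\ge 0$ dominating the dissipation measure, but quasi-solutions carry no sign). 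Your parenthetical ``carry the regularity equivalent to \eqref{cond_1}'' is the honest fix; as literally stated the equivalence presupposes that regularity on both sides, a looseness already present in the proposition itself.
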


Now, we can state the main result of this section:
\begin{theorem}\label{mainalbas1} Assume that $u$ is a quasi solution to
\eqref{opsta1} and that the coefficients of equation \eqref{kin_1h}
satisfy the weak traceability property. Then, there exists a
function $u_0\in L^\b(\R^d)$, such that
         $$
         L^1_{\rm loc}(\R^d)-\lim_{t\to 0} u(t,\cdot) = u_0.
         $$
\end{theorem}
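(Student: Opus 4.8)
The plan is to pass to the kinetic formulation and then invoke the transport-equation results established above; the only point requiring care is that the kinetic function of a quasi solution need not satisfy the standing regularity hypothesis \eqref{assum} globally in $\lambda$, since the diffusion is allowed to degenerate completely at the thresholds $\lambda_i$ from \eqref{4}. This obstruction is removed by truncating $u$ between consecutive thresholds and then reassembling.

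By Proposition~\ref{prop-kin}, the kinetic function $h$ of $u$ given by \eqref{kinh} is a distributional solution of \eqref{kin_1h}, an equation of the form \eqref{opsta} with $F=\pa_\lambda f$, $b_{ij}=\pa_\lambda B_{ij}$ and right-hand side $-\pa_\lambda m$, whose coefficients satisfy the weak traceability conditions by assumption. Since $|u|\le M$, the function $h(t,x,\cdot)$ is supported in $[-M,M]$ and $u(t,x)=\int_{\R}h(t,x,\lambda)\,d\lambda$. However \eqref{cond_1} controls only the degenerate combination $\sum_{i=1}^k\sigma^b_{li}(u)\pa_{x_i}u$, so $\|\pa_{x_i}h\|_{{\cal M}(\R_\lambda)}=|\pa_{x_i}u|$ need not lie in $L^2_{\rm loc}(\R^d_+)$, and Theorem~\ref{mainalbas2} cannot be applied to $h$ directly.

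To truncate, fix $j$ and an interval $(a,b)\subset(\lambda_j,\lambda_{j+1})$, so $c>0$ on $[a,b]$. By Lemma~\ref{sab} the function $v=s_{a,b}(u)$ is again a quasi solution of \eqref{opsta1}, hence by Proposition~\ref{prop-kin} its kinetic function $h^{a,b}$ solves \eqref{kin_1h} with the same coefficients, so the weak traceability conditions still hold for it. Moreover $h^{a,b}$ is bounded, vanishes for $\lambda\notin[a,b]$, and $\|\pa_{x_i}h^{a,b}\|_{{\cal M}(\R_\lambda)}=|\pa_{x_i}s_{a,b}(u)|\in L^2_{\rm loc}(\R^d_+)$ by Lemma~\ref{regul}; thus $h^{a,b}$ satisfies \eqref{assum}, and Theorem~\ref{mainalbas2} yields $h_0^{a,b}\in L^\infty(\R^d\times\R)$ and a set of full measure along which $\int_\R h^{a,b}(t,x,\lambda)\rho(\lambda)\,d\lambda\to\int_\R h_0^{a,b}(x,\lambda)\rho(\lambda)\,d\lambda$ in $L^1_{\rm loc}(\R^d)$ for every $\rho\in C^1_c(\R)$. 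Finally, comparing definitions one checks that for $\lambda\in(a,b)$ one has $\lambda<s_{a,b}(u)\iff\lambda<u$ and that the sign of $h$ is constant there, so $h^{a,b}=h$ on $\R^d_+\times((a,b)\setminus\{0\})$; hence the same $L^1_{\rm loc}(\R^d)$-limit holds for $\int_\R h(t,x,\lambda)\rho(\lambda)\,d\lambda$ whenever $\rho\in C^1_c(a,b)$.

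It remains to reassemble $u$. In each gap $(\lambda_j,\lambda_{j+1})$ choose a countable family of intervals $(a,b)$ whose union is the whole gap, and let $E\subset\R^+$ be the (full-measure) intersection of all the corresponding full-measure sets; along $E$, $\int_\R h(t,x,\lambda)\rho(\lambda)\,d\lambda$ converges in $L^1_{\rm loc}(\R^d)$ whenever $\rho$ is supported in one of these intervals. Given $\delta>0$, pick an open set $O$ with $\{\lambda_i\}\subset O$ and $|O|<\delta$ (possible, since $\{\lambda_i\}$ is countable); the compact set $[-M,M]\setminus O$ is covered by finitely many of the chosen intervals, so there is a subordinate partition of unity $\{\rho_m\}_{m=1}^N$ with $0\le\sum_m\rho_m\le1$, $\sum_m\rho_m\equiv1$ on $[-M,M]\setminus O$, and ${\rm supp}\,\sum_m\rho_m\subset[-M-1,M+1]$. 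Then
$$
u(t,x)=\sum_{m=1}^N\int_\R h(t,x,\lambda)\rho_m(\lambda)\,d\lambda+\int_\R h(t,x,\lambda)\Bigl(1-\sum_{m=1}^N\rho_m(\lambda)\Bigr)d\lambda ,
$$
the finite sum converges in $L^1_{\rm loc}(\R^d)$ as $t\to0$ along $E$, and the remainder is bounded in absolute value by $\|h\|_{L^\infty}\,|O|<\delta$ uniformly in $(t,x)$, since $h(t,x,\cdot)$ is supported in $[-M,M]$. Hence $\{u(t,\cdot)\}_{t\in E}$ is Cauchy in $L^1_{\rm loc}(\R^d)$ as $t\to0$, and its limit $u_0$, satisfying $\|u_0\|_{L^\infty}\le M$, is the desired trace (along the full-measure set $E$, in the sense of Definition~\ref{avsubt}; that the strong $L^1_{\rm loc}$-limit of $u(t,\cdot)$ holds for all $t\to0$ is then obtained from the time-regularity of quasi solutions, e.g. by testing \eqref{cond_31} against functions of $x$ alone, in the usual way). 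The genuine difficulty lies in Theorems~\ref{t-homog} and \ref{mainalbas2}; in this section the only real work is the $\lambda$-localization around the degeneracy set $\{\lambda_i\}$ together with the uniform control of the contribution of $h$ near it.
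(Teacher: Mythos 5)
Your proof is correct and follows the same route as the paper: pass to the kinetic function $h$ via Proposition \ref{prop-kin}, obtain its averaged trace from Theorem \ref{mainalbas2}, and recover $u_0$ by taking $\rho\equiv 1$ on $(-M,M)$. Your version is in fact more complete than the paper's own very terse argument, which applies Theorem \ref{mainalbas2} to $h$ directly without verifying \eqref{assum}: the truncation between consecutive degeneracy thresholds via Lemmas \ref{sab} and \ref{regul}, followed by the partition-of-unity reassembly near the set $\{\lambda_i\}$, is exactly the step those lemmas were prepared for but which the paper never explicitly carries out in the proof of Theorem \ref{mainalbas1}.
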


{\bf Proof:}        Let $F$ be dense countable subset of $\R$, such that for all $c\in F$, \eqref{cond_3} holds.
       Remark that for any $g\in L^\infty(\R_+^d\times \R) $ the function $h$ defined in \eqref{kinh} satisfies
       \begin{equation}
             \int_{\R}\int_{\R_+^d} h(t,x,\lambda) g(t,x,\lambda)dt dx
             d\lambda=\int G(t,x,u(t,x))dt dx,
       \end{equation}
       where $G(t,x,v)=\int_0^v g(t,x,\lambda)d\lambda$, (see e.g. \cite[(1.2.5)]{brenier}). If we take $g(t,x,\lambda)={\rm sgn}(\lambda-c)$,
       from Proposition \ref{wt} we have that there exists a function $u_0 \in L^\b(\R^d)$, and functions $v_c\in L^\b(\R^d)$
such that for every $c\in \R$
       \be\label{ccc}
             u(t, \cdot)\, -\!\!\!\rightharpoonup u_0 \quad \mbox{ and } \quad |u(t,\cdot)-c| -\!\!\!\rightharpoonup v_c,
       \ee
       weakly - $\star$ in $L^\b(\R^d)$, as $t\to 0$, $t\in E$.

       Furthermore, according to Theorem \ref{mainalbas2}, there exists the
averaged trace for the function $h$ defined in \eqref{kinh}. If we
notice that
$$
\int h(t,x,\lambda)\rho(\lambda)d\lambda =u,
$$ if $\rho\equiv 1$ on $(-M,M)$, we conclude that $u(t,\cdot) \to u_0$ in $L^1_{\rm
       loc}(\R^d)$. $\Box$

\begin{example}
 Consider the one-dimensional scalar conservation law,
         \begin{equation*}
                   \pa_t u +\pa_x (x u)=0.
         \end{equation*}
         In this case, we simply take $\hat{x}(x)={\rm
         ln}|x|$, $x\neq 0$, and we infer that the strong
         traceability conditions are fulfilled locally almost
         everywhere (i.e. on the set $(-\infty,0)\cup (0,\infty) $) implying that the traces at $t=0$ exist.

Less trivial example is the two dimensional scalar conservation law
          which is linear in the direction of the first space variable (i.e.
          it is not non-degenerate),
          \begin{equation*}
                  \pa_t u +\pa_{x_1} (x_1 u)+\pa_{x_2}(x_1 u^2)=0.
          \end{equation*}
          In this case, we first choose $\hat t=t$, $\hat{x}_1 ={\rm
          ln}|x_1|$, $x_1\neq 0$,  $\hat{x}_2 =x_2$. Locally, this reduces equation
          to
          $$
          \pa_{\hat t} \hat{u} +\pa_{\hat{x}_1}\hat{u} +\pa_{\hat x_2}(e^{\hat{x}_1}
          \hat{u}^2)=\hat{u},
          $$
          where $\hat u =u(\hat t, \hat x)$. Then, we take
          $\tilde{t} = \hat x_1+ \hat t$, $z_1=\hat x_1 - \hat t$, $z_2=\hat x_2$, and
          we thus reduce the equation to
          $$
          2 \pa_{\tilde{t}} \tilde{u} +\pa_{z_2}(e^{\frac{1}{2} ( {\tilde{t}+z_1} )}
          \tilde{u}^2)=\tilde{u}.
          $$
          This equation satisfies condition {\em 1)} from Definition 8 if we consider
          $z_ 1$ as a parameter (and, accordingly, its kinetic counterpart is also non-degenerate). Thus, the weak traceability
          conditions are fulfilled and the traces to $u$ exist.

\end{example}

\section*{Acknowledgements}

The authors are immensely grateful to the referee for his patience
and help. Without his comments and remarks the paper would hardly be
completed.

The research is done under the DAAD
Stability Pact for South Eastern Europe, Center of Excellence for Applications of Mathematics.

The work of J. Aleksi\' c is partially supported by Ministry
              of Education and Science, Republic of Serbia,
              project no. 174024.

              Darko Mitrovic is engaged as a part time researcher at
              the University of Bergen. The position is financed by
              the Research Council of Norway whose support we
              gratefully acknowledge.

\end{document}